\pgfplotsset{compat=1.3}
\pgfplotsset{my style/.append style={axis x line=middle, axis y line= middle, xlabel={$\gamma$}, ylabel={$v$}, axis equal }}
\def\Lip{{\hbox{\rm Lip}}}
\newcommand{\RR}{\mathbb R}
\newcommand{\NN}{\mathbb N}
\newcommand{\R}{\mathbb R}
\newcommand{\rr}{{\RR_+}}
\newcommand{\ue}{\frac{1}{\varepsilon}}
\newcommand{\dt}{\partial_t}
\newcommand{\e}{\varepsilon}
\newcommand{\dz}{\partial_z}
\newcommand{\dzz}{\partial^2_{z^2}}
\newcommand{\normel}{\left\|}
\newcommand{\normer}{\right\|}
\newcommand{\sgn}{\,{\rm sgn}}
\newcommand{\ddt}[1]{\frac{{\rm d} #1}{\rm d \it t}}
\newcommand{\ti}[1]{\tilde{#1}}
\newcommand{\vertiii}[1]{{\left\vert\kern-0.25ex\left\vert\kern-0.25ex\left\vert #1 
		\right\vert\kern-0.25ex\right\vert\kern-0.25ex\right\vert}}
\newcommand{\nrm}[2]{{\normel{#1}\normer}_{#2}}
\newcommand{\SysNoNumB}[1]{
	\begin{displaymath}
	\left\{
	\begin{array}{#1}
}
\newcommand{\SysNoNumE}{
	\end{array}
	\right.
	\end{displaymath}
}
\newcommand{\TabNoNumB}[1]{
	\begin{displaymath}
	\begin{array}{#1}
}
\newcommand{\TabNoNumE}{
	\end{array}
	\end{displaymath}
}
\newcommand{\TabNumB}[2]{
	\begin{equation}
	\label{#2}
	\begin{array}{#1}
}
\newcommand{\TabNumE}{
	\end{array}
	\end{equation}
}
\newcommand{\SysNumB}[2]{
	\begin{equation}
	\left\{
	\label{#2}
	\begin{array}{#1}
}
\newcommand{\SysNumE}{
	\end{array}
	\right.
	\end{equation}
}
\newcommand{\etau}[1]{}
\newcommand{\chiu}[1]{\ensuremath{\chi_{#1}}}
\newcommand{\cR}{{\mathcal R}}
\newcommand{\cH}{{\mathcal H}}
\newcommand{\ovu}{\ov{U}}
\newcommand{\rhoe}{\rho_\e}
\newcommand{\rhoz}{\rho_0}
\newcommand{\da}{\partial_a} 
\newcommand{\zeps}{{  z}_\e}
\newcommand{\veps}{{ u}_\e}
\newcommand{\vepsi}{{ u}_{I}}
\newcommand{\zp}{{\mathbf  z}_p}
\newcommand{\tit}{\tilde{t}}
\newcommand{\cT}{{\mathcal T}}
\newcommand{\hv}{{\hat{v}_\e}}
\newcommand{\vz}{u_0}
\newcommand{\dtit}{\partial_{\tit}}
\newcommand{\rhoinf}{\rho^{\infty}}
\newcommand{\st}{\text{ s.t. }}
\newcommand{\tia}{{\ti{a}}}
\newcommand{\cE}{{\mathcal E}}
\newcommand{\hz}{{\hat{z}_\e}}
\newcommand{\tse}{{\frac{t}{\e}}}
\newcommand{\zz}{{\mathbf z}_0}
\newcommand{\cL}{{\mathcal L}}
\newcommand{\vepsd}[2]{{\mathbf  U}^{#1}_{\e,#2}}
\newcommand{\cle}{\cL_\e}
\newcommand{\zepsd}[1]{\bfZ^{#1}_{\e}}
\newcommand{\zpeps}{z_{p,\e}}
\DeclareMathOperator*{\argmin}{argmin}
 \renewcommand{\zp}{z_p}
\renewcommand{\zz}{z_0}
\renewcommand{\hv}{\hat{u}}
\newcommand{\dzeps}{\dot{z}_\e}
\renewcommand{\dzz}{\dot{z}_0}
\newcommand{\kernel}{\varrho}
\newcommand{\vinf}{v_\infty}
\newcommand{\zinf}{z_\infty}
\renewcommand{\rhoinf}{\kernel_\infty}
\newtheorem{theorem}{Theorem}[section]
\newtheorem{corom}{Corollary}[section]
\newtheorem{assum}{Assumptions}[section]
\newtheorem{lemm}{Lemma}[section]
\newtheorem{propm}{Proposition}[section]
\newtheorem{rmkm}{Remark}[section]
\newproof{proof}{Proof}
\begin{document}
\begin{frontmatter} 

\title{Asymptotic limits  for a non-linear integro-differential equation modelling leukocytes' rolling on arterial walls}
\author[1]{Vuk Mili\v{s}i\'c\fnref{fn1}}
\address[1]{CNRS/Laboratoire de Mathématiques de Bretagne Atlantique, UMR 6205, {France}}
\fntext[fn1]{{\tt vuk.milisic@univ-brest.fr}}

\author[2]{Christian Schmeiser\fnref{fn2}}
\address[2]{Fakult\"at f\"ur Mathematik, Universit\"at Wien, 
	1090 Wien, Austria}
\fntext[fn2]{{\tt christian.schmeiser@univie.ac.at}}




\begin{keyword}
	{{Leukocyte rolling,}}
	{Lipschitz mechanical energy,}
	{delayed gradient flow,}
	{Volterra integral equations,}
	{asymptotic limits}
\end{keyword}


\begin{abstract}{
      We consider a non-linear integro-differential  model describing $z$, the position of
	the cell center on the real line presented in \cite{pmid29571710}. 
	We introduce a
	new $\e$-scaling and we prove rigorously the asymptotics when $\e$ goes to zero. 
	We show that this scaling characterizes the long-time behavior of the solutions
	of our problem in the cinematic regime ({\em i.e.} the velocity $\dot{z}$ tends to a limit).
	The convergence results are first given when $\psi$, the elastic energy associated to linkages, 
	is convex and regular (the second order derivative of $\psi$
	is bounded). 
	In the absence of blood flow,  when $\psi$,
	is quadratic, we compute the final position $\zinf$ to which we prove that $z$ tends.
	We then build a rigorous mathematical framework for $\psi$ being  convex but only Lipschitz.
	We extend convergence results with respect to $\e$ to the case when 
	$\psi'$ admits a finite number of jumps.
	In the last part, we show that in the constant force case (see Model 3 in \cite{pmid29571710}, {\em i.e.}  $\psi$ is the absolute value), we solve explicitly the problem and recover
	 the above asymptotic results.}
\end{abstract}
\end{frontmatter}


\setcounter{tocdepth}{1}
\tableofcontents

\section{Introduction}
Neutrophils are the first line of defense against bacteria and fungi and help fighting parasites and viruses. They are
necessary for mammalian life, and their failure to recover after myeloablation is fatal. 
Neutrophils are short-lived, effective killing machines. 
They take their cues directly from the infectious organism, 
from tissue macrophages and other elements of the immune system. 
Neutrophils get close to their destination through the blood system. 
{When receiving chemical signals, they express adhesion molecules \cite{pmid23112187}, responsible for their rolling, slowing down, and eventual sticking to 
vessel walls \cite{pmid1572393} (see Fig. \ref{fig.neutro}), followed by extravasation and crawling through tissue towards their final destination.}
\begin{figure}[ht!]
\begin{center}
 \includegraphics[width=0.9\textwidth]{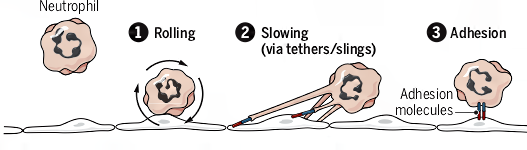}
\end{center}
\caption{A schematic view of the interactions between a  neutrophil and the arterial wall in the blood flow. (illustration taken from \cite{pmid30530726})}\label{fig.neutro}
\end{figure}

In this article we analyze a class of models for the process of rolling and slowing down along the vessel wall by transient elastic linkages. The model has
the nondimensionalized form
\begin{equation}\label{eq.z.nl}
 \begin{aligned}
 & \dzeps(t) + \int_\rr \psi'\left( \frac{\zeps(t)-\zeps(t-\e a)}{\e} \right) \kernel(a,t) da= v(t)  \,, & t \in (0,T] \,,\\
 & \zeps(t) = \zp(t) \,, &t\le 0 \,.
\end{aligned}
\end{equation}
Here $\zeps(t)\in\R$ is the position of the cell at time $t$ with the given past positions $\zp(t)$, $t\le 0$. The integro-differential equation describes a 
force balance between the friction force $f(t) = v(t) - \dzeps(t)$ with the blood flow velocity $v(t)$, and the elastic linkage forces between the cell and the
vessel wall, described by the integral. These forces are parametrized by the age $a$ of the linkages, and the density of linkages with respect to their age
is given by $\rho(a,t)\ge 0$. The function $\psi$ describes the potential energy of a linkage, dependent on the distance $\zeps(t)-\zeps(t-\e a)$ between the present
position of the cell and its position when the linkage has been established (see Fig. \ref{fig.roll.cell.scheme}). The dimensionless parameter $\e>0$ results from scaling 
and represents the ratio between the typical age of a linkage and a characteristic time for the cell movement. Small values of $\e$ correspond to a rapid turnover of 
linkages. The occurrence of the factor $1/\e$ is a scaling assumption, needed to obtain an effect of the linkages in the limit $\e\to 0$. However, \eqref{eq.z.nl} can also
be seen as a macroscopic rescaling of the model for the microscopic unknown $Z(\tau)=\zeps(\e\tau)/\e$. Note that in this interpretation we assume the data $v$
and $\rho$ to vary in terms of the macroscopic time $t$.
 \begin{figure}[ht!]
	\centering\footnotesize
\includegraphics[width=0.9\textwidth]{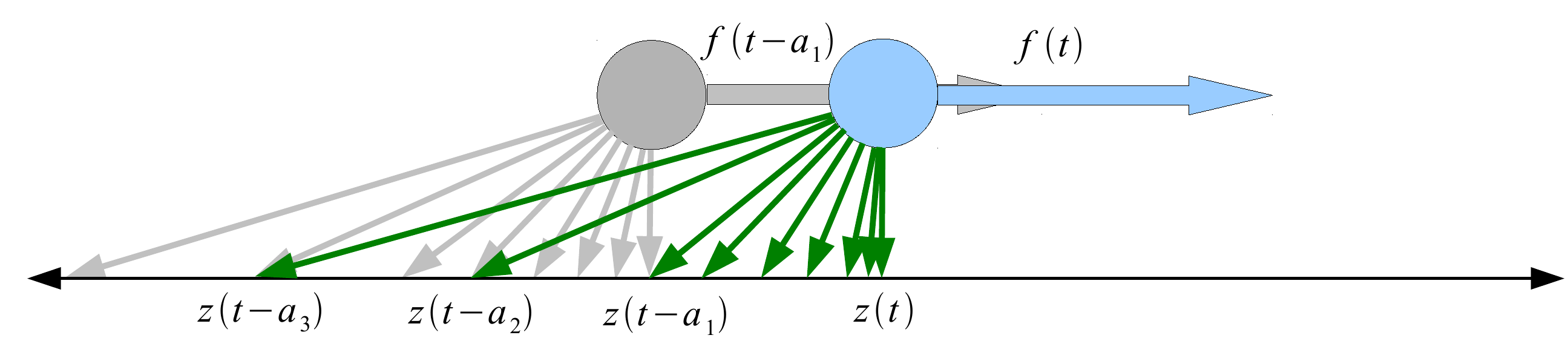}
\caption{The position of the moving binding site at time $t$ and time $t-{a_1}$ with some of the respective linkages.}\label{fig.roll.cell.scheme}
\end{figure}

{Models of the form \eqref{eq.z.nl} with various choices of $\psi$ have been derived in \cite{pmid29571710}, passing from a probabilistic description to an averaged 
version. The simplest example is a linear model with quadratic potential energy $\psi(u) = u^2/2$. This has already been formulated in 1960s together with the 
formal macroscopic limit as a derivation of rubber friction \cite{Schallamach}. It has also been used in the context of the Filament Based Lamellipodium Model \cite{OeSch,MR3385931} for the crosslinking between cytoskeleton filaments and cell-substrate adhesion. There it is usually coupled with an age structured population
model for the density $\rho$. Its mathematical analysis has been developed in \cite{MiOel.1,MiOel.2,MiOel.3,MiOel.4,Mi.5,mi.proc}. \\
Nonlinear models may contain the effects of material or
of geometric nonlinearities. An example for the latter is a model with linkages in the form of membrane tethers \cite{pmid30530726} connecting the anchoring point with the
closest boundary point of a circular cell with (microscopic) radius $r$, see Fig. \ref{fig.pyth}. This gives a tether length $\ell(u) = \sqrt{u^2+r^2}-r$ and, with linear material 
properties, $\psi(u) = \ell(u)^2/2 = O(u^4)$ as $u\to 0$. Concerning material nonlinearities we also allow models with nondifferentiable potentials such as constant force 
$\psi(u)=|u|$. Our main structural assumption is convexity of $\psi$.}

\tikzstyle{vecArrow} = [thick, decoration={markings,mark=at position
   1 with {\arrow[semithick]{open triangle 60}}},
   preaction = {decorate},
   postaction = {draw,line width=0.4pt}]
   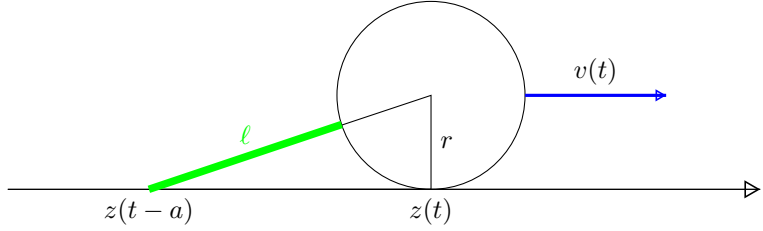
\begin{figure}[ht!]
 	\begin{center}
 		\begin{tikzpicture}[scale=1.25]
		\coordinate [label=below:$z(t-a)$] (A) at (-1.5cm,-1.cm);
		\coordinate [label=below:$z(t)$] (C) at (1.5cm,-1.0cm);
		\coordinate  (D) at (0.55,-0.31);
		\coordinate (B) at (1.5cm,0cm);
		\coordinate (Bp) at (2.5cm,0cm);
		\coordinate (Bpp) at (4cm,0cm);
		\coordinate (E) at (-3cm,-1cm);
		\coordinate (F) at (5cm,-1cm);
		\draw [vecArrow,line width=0.4pt] (E) -| (F);
		\draw (A) -- node[above] {} (B) -- node[right] {$r$} (C) -- node[below] {} (A);
		\draw (1.5,0) circle (1cm);
		\draw[green,
		decoration={markings,mark=at position 1 with },
		postaction={decorate},line width=1.0mm ] 
(D) -- node[above] {$\ell$}   (A);
		\draw[blue,decoration={markings,mark=at position 1 with {\arrow[scale=0.5,blue]{open triangle 60}}},
		postaction={decorate},very thick, text=black] (Bp)-- node[above] {$v(t)$}(Bpp);
		\end{tikzpicture}
	\end{center}
   \caption{The actual length of filaments of a cell of radius r is the dashed (green) segment whose length is $\ell=\sqrt{(z(t)-z(t-a))^2+r^2}-r
   	$}\label{fig.pyth}
   \end{figure}

{The formal macroscopic limit of \eqref{eq.z.nl} as $\e\to 0$ reads
\begin{equation}\label{eq.zz.nl}
 \begin{aligned}
 & \dzz(t)  + \int_\rr \psi'(a \dzz(t))\kernel(a,t)  da = v(t) \,,  & t>0 \,,\\
 &  \zz(t) =\zp(t) \,, & t\le0 \,.
\end{aligned}
\end{equation}
Convexity of $\psi$ implies that the left hand side of the implicit ODE is a strictly increasing function of $\dzz(t)$.
A rigorous justification of the macroscopic limit has been given in \cite{MiOel.1} for the linear problem without the
additional friction force due to the blood flow. Generalizations of this result belong to the main goals of the present work.\\
Strongly related is the long time asymptotics. Assuming the data $(\rho(a,t),v(t))$ to converge to $(\rho_\infty(a),v_\infty)$ as 
$t\to\infty$, we expect convergence of the velocity $\dzeps(t)$ to a constant $\dot{z}_\infty$ satisfying
\begin{equation}\label{eq.lim.t.large}
	\dot{z}_\infty + \int_\rr \psi'(a \dot{z}_\infty) \kernel_\infty(a) da = v_\infty \,,
\end{equation}
essentially the same equation as in \eqref{eq.zz.nl}. Again we shall be interested in making this limit rigorous.
}

Another concern of this article, motivated by the formal computations
made in \cite{pmid29571710} is to give a rigorous mathematical meaning 
{to \eqref{eq.z.nl} in the case, when $\psi$ is only Lipschitz (as a consequence of convexity), and to justify the asymptotic limits
also in this situation.}

\noindent The main results of this paper can be summarized as follows :
\begin{enumerate}[i)]
	\item 
{For $\psi$ convex and additionally with Lipschitz continuous derivative, a comparison principle for a class of integro-differential equations including \eqref{eq.z.nl}
(proved in Section 2) is used in Section 3 to obtain an a priori estimate allowing to show global existence of a unique solution of \eqref{eq.z.nl}. The comparison principle
is also used for an error estimate in the rigorous justification of the limit $\e\to 0$. Under weak convergence assumptions on the data as $t\to\infty$ we prove
$z(t) = \dot{z}_\infty t + O(1)$ for the solution $z$ of \eqref{eq.z.nl} with $\e=1$, where $\dot{z}_\infty$ is the unique solution of \eqref{eq.lim.t.large}. The asymptotic 
behaviour of the $O(1)$-term remains open in general, except for a simple linear model problem with $\dot{z}_\infty=0$, where the limit of $z(t)$ can be computed 
explicitly.}

	\item 
{In Section 4 the case of convex (and therefore locally Lipschitz) $\psi$ without any further smoothness assumptions is treated, except global Lipschitz continuity.
In this situation a new notion of solution is needed. We take inspiration from gradient flows for nonsmooth energy functionals \cite{Evans.Book} and rewrite the
problem with a smoothed potential as a variational inequality, where we can pass to the nonsmooth limit. The limiting variational inequality 
	\begin{equation}\label{eq.diff.inclusion.z.intro}
	\begin{aligned}
		(v(t)-\dzeps(t))(w-&\zeps(t)) + \e \int_\rr \psi\left(\frac{\zeps(t)-\zeps(t-\e a)}{\e}\right)  \kernel(a,t) da   \\
		& \leq   \e \int_\rr \psi\left(\frac{w-\zeps(t-\e a)}{\e}\right) \kernel(a,t) da \,, \quad \forall w \in \RR \,,
		\end{aligned}
	\end{equation}
is then equivalent
to the differential inclusion
$$
  v(t) - \dzeps(t) \in \partial \int_\rr \e\, \psi\left( \frac{\zeps(t)-\zeps(t-\e a)}{\e} \right) \kernel(a,t) da  \,, 
$$
where the right hand side is the subdifferential of the integral interpreted as a function of $\zeps(t)$. We prove global existence of a solution in this sense.
With $w=\zeps(t) + \e \hat w$, the variational inequality is written in a form where we can pass to the limit $\e\to 0$, giving
	\begin{equation}\label{eq.zz.nl.lip}
		\begin{aligned}
		(v-\dzz(t)) & \hat w + \int_{\rr}  \psi( a \dzz(t)) \kernel(a,t) da 
		  \leq \int_{\rr} \psi( a \dzz(t)+\hat w ) \kernel(a,t) da , \quad \forall \hat w \in \RR \,.
		\end{aligned}
	\end{equation} 
The linearization approach of Section 3 for the rigorous limit does not work in the nonsmooth case. However, convergence can be proved under the additional
assumptions of time-independent data $(\rho,v)$, finitely many discontinuities of $\psi'$, and a nonvanishing limiting velocity. The proof relies on the fact that,
by the nonvanishing velocity, the argument of $\psi'$ is close to the discontinuities only for a small set of values of $a$. 
We then extend this result to data $(\kernel_\e,v_\e)$ non-constant in time but whose $\e$-limit pair $(\kernel_0,v_0)$ is constant.
Finally the convergence as $t\to\infty$
is transformed to the convergence as $\e\to 0$ by a rescaling, allowing to apply the previous result. This gives essentially that $z(t) = \dot{z}_\infty t + o(t)$, i.e. a weaker
result than for smooth potentials, where $\dot{z}_\infty$ is equal to the solution $\gamma$ of
	\begin{equation}\label{eq.zz.nl.lip.cst}
		(v- \gamma)w + \int_\rr \psi(a \gamma ) \kernel(a) da \leq \int_\rr \psi(w + a\gamma )  \kernel(a)da \,,\quad \forall w \in \RR \,.
	\end{equation}
}

	\item In order to illustrate our results, we  consider {in Section 5} the case when $\psi (u)= |u|$, 
	and study solutions of \eqref{eq.zz.nl.lip}. We show a {\em plastic} asymptotic behavior of the model~:
	if $\vinf \notin (-\mu_\infty,\mu_\infty)$ where $\mu_\infty := \int_\rr \rhoinf (a) da$, then $\gamma +\mu_\infty \sgn(\gamma) = \vinf$ and $z \sim \gamma t$ when $t$ is large. 
	If $\vinf \in  [-\mu_\infty,\mu_\infty]$, the unique solution of \eqref{eq.zz.nl.lip} is $\gamma=0$  : the neutrophil should stop. 
	In this latter case, the previous asymptotic results do not prove that actually $\dot{z}$ vanishes for
	$t$ growing large. 
	Assuming that $\kernel(a,t):=\rhoinf(a) \chiu{\{a<t\}}(a,t)$ with $\rhoinf$ being a decreasing integrable function and $\chiu{\{a<t\}}(a,t)$ the characteristic function of the set $\{a<t\}$, we show that 
	$$
		z(t) = \begin{cases}
	z^0+ \int_0^t[\vinf-\mu_\infty(\tau)]_+ d\tau, & \text{if} \; \vinf \geq 0,\\
	z^0+ \int_0^t[\vinf+\mu_\infty(\tau)]_- d\tau, & \text{if} \; \vinf \leq 0. 
	\end{cases}
	$$
	where $\mu_\infty(t):=\int_{0}^{t} \rhoinf(t) dt$ and $[\cdot]_\pm$ denotes the positive/negative part. 
	The same approach gives an explicit profile of $z(t)$ in the case when $\vinf \notin [-\mu_\infty,\mu_\infty]$.
	All these arguments provide  rigorous mathematical justifications 
	of numerical observations and formal computation in \cite[Section 3.3.2]{pmid29571710}.
\end{enumerate}

\section{{Notations, generic hypotheses, and a comparison principle}}

{We introduce some notation for the rest of this article. For the final time $T\in (0,\infty]$ we introduce 
$I_T:=[0,T]$ for $T<\infty$ and $I_T:=[0,\infty)$ for $T=\infty$}. {For functional space we write}
$L^p_t L^q_a := L^p(I_T;L^q(\rr))$ for any real $(p,q) \in [1,\infty]^2$, 
and similarly  $L^\infty_{a,t} := L^\infty(\rr\times I_T)$.
The weighted $L^p$ space of functions of $a\in\rr$ with non-negative weight $\omega(a)$ is denoted by $L^p(\omega(a)da)$, $1\le p\le \infty$. 

We state the basic hypotheses that are common to 
results presented hereafter. Extra hypotheses will be
assumed locally in the claims.

\begin{assum}\label{hypo.data}
{For $0<T\le\infty$ we assume that }
	\begin{compactenum}[i)]
	        \item {The potential $\psi$ is even, convex, and $\psi(0)=0\le\psi(u)$, $u\in\R$.}
		\item The past {data $\zp$ is bounded and} Lipschitz on $\RR_-$, {\em i.e.},
		$$
		| \zp(a_1) - \zp(a_2) | \leq L_p |a_1 -a_2|, \qquad (a_1,a_2) \in \RR_-^2 \,.
		$$
		\item The source term {satisfies $v\in C^1(I_T)$}.
		\item The {nonnegative} kernel {satisfies $\kernel\in C_B(I_T;L^1((1+a^2)da))$}.
	\end{compactenum}
\end{assum}

{For later use we prove a comparison principle and a stability estimate for a class of integro-differential equations including \eqref{eq.z.nl}.
\begin{lemm}\label{lemm:comp}
Let $\e,T>0$ and let $\phi(a,t,u)$ be measurable with respect to $(a,t)$,
and let it be odd and nondecreasing as a function of $u$. Let the operator $\cH$ be defined by
$$
   \cH[z](t) := \dot z(t) + \int_0^\infty \phi\left(a,t,\frac{z(t)-z(t-\e a)}{\e}\right)da \,,\qquad 0<t\le T\,,
$$
acting on functions $z$, whose values on $(-\infty,0]$ are prescribed. 
Then $\cH$ satisfies the comparison principle
$$
    \Bigl(\cH[z](t)\ge 0 \mbox{ for } t>0 \Bigr)\quad\mbox{and}\quad \Bigl(z(t)\ge 0 \mbox{ for } t\le 0\Bigr) \qquad\Longrightarrow\qquad z\ge 0 \,.
$$
Any solution $z\in C([0,T])$ of the problem
$$
   \cH[z](t) = f(t) \,,\quad t>0 \,;\qquad z(t) = z_p(t) \,,\quad t\le 0 \,,
$$
satisfies 
$$
    |z(t)| \le \sup_{(-\infty,0)} |z_p| + \int_0^t |f(\tau)|d\tau \,,\qquad 0\le t\le T \,.
$$
\end{lemm}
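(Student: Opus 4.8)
The plan is to derive both statements from a slightly stronger \emph{two-function} comparison principle: if $\cH[z_1]\ge \cH[z_2]$ on $(0,T]$ and $z_1\ge z_2$ on $(-\infty,0]$, then $z_1\ge z_2$ on $[0,T]$. The stated one-function principle is the special case $z_2\equiv 0$, because oddness of $\phi$ in $u$ gives $\phi(a,t,0)=0$ and hence $\cH[0]=0$. To prove the two-function version I would regularize $z_1$ by the strictly positive, time-increasing perturbation $z_1^\delta(t):=z_1(t)+\delta e^{t}$ with $\delta>0$. Since the delay integral runs over $a\ge 0$, one has $e^{t}-e^{t-\e a}\ge 0$, so the argument $\frac{z_1^\delta(t)-z_1^\delta(t-\e a)}{\e}$ only increases compared with the one for $z_1$; monotonicity of $\phi(a,t,\cdot)$ then yields
\[
\cH[z_1^\delta](t)\;\ge\;\cH[z_1](t)+\delta e^{t}\;\ge\;\cH[z_2](t)+\delta e^{t}\,,\qquad t\in(0,T]\,.
\]

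The core of the argument is a \emph{first-touching-time} estimate. On $(-\infty,0]$ we have $z_1^\delta>z_2$, and at $t=0$ the gap is at least $\delta$; if $z_1^\delta\le z_2$ somewhere on $[0,T]$, let $t_0>0$ be the first time $z_1^\delta(t_0)=z_2(t_0)$, so that $w:=z_1^\delta-z_2$ is positive on $(-\infty,t_0)$ and vanishes at $t_0$. The left difference quotients of $w$ force $\dot z_1^\delta(t_0)\le \dot z_2(t_0)$. Moreover $z_1^\delta(t_0)=z_2(t_0)$ while $z_1^\delta(t_0-\e a)\ge z_2(t_0-\e a)$ for every $a>0$, whence $z_1^\delta(t_0)-z_1^\delta(t_0-\e a)\le z_2(t_0)-z_2(t_0-\e a)$; using again that $\phi(a,t,\cdot)$ is nondecreasing, the two integral terms satisfy the reverse inequality and one obtains $\cH[z_1^\delta](t_0)\le \cH[z_2](t_0)$. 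This contradicts the strict inequality above, so $z_1^\delta>z_2$ on $[0,T]$ for every $\delta>0$; letting $\delta\to0^+$ gives $z_1\ge z_2$.

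The step I expect to be delicate is exactly this strictness: at a mere contact point all three ingredients ($\dot w\le0$, the delayed difference $\le 0$, oddness) are individually compatible with equality, so a bare minimum principle does not close. The exponential perturbation is what breaks the tie, and it is essential that it be monotone in $t$ (so the delay term is controlled via $a\ge0$) and strictly positive on the whole half-line $(-\infty,0]$ (so the past ordering is preserved); a linear perturbation $\delta(1+t)$ would fail the latter on the unbounded past. A secondary technical point is the differentiability used in the touching argument: it is legitimate because $\cH[z]$ already presupposes $\dot z(t)$, and for the stability statement $z\in C([0,T])$ solving $\cH[z]=f$ satisfies $\dot z=f-\int_0^\infty\phi\,da$, so $z$ is (absolutely) continuous with the one-sided derivative used at $t_0$.

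Finally I would obtain the stability bound by applying the two-function principle against explicit super- and subsolutions. Set $M:=\sup_{(-\infty,0)}|z_p|$ and the barrier $\ov z(t):=M+\int_0^t|f(\tau)|\,d\tau$ for $t\ge0$, extended by $\ov z\equiv M$ on $(-\infty,0]$. On the past $\ov z=M\ge z_p=z$, and for $t>0$ one has $\dot{\ov z}=|f|$ while $\ov z(t)-\ov z(t-\e a)\ge0$ (the primitive $\int_0^{(\cdot)}|f|$ being nondecreasing), so $\int_0^\infty\phi(a,t,\cdot)\,da\ge0$ and $\cH[\ov z]\ge |f|\ge f=\cH[z]$; the comparison principle gives $z\le \ov z$. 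Running the same argument with $-\ov z$, whose delayed difference is $\le0$ and hence contributes a nonpositive integral, yields $\cH[-\ov z]\le -|f|\le f=\cH[z]$ together with $-\ov z\le z_p=z$ on the past, so $z\ge -\ov z$. Combining the two bounds gives $|z(t)|\le \ov z(t)=\sup_{(-\infty,0)}|z_p|+\int_0^t|f(\tau)|\,d\tau$, as claimed.
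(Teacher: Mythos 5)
Your proposal is correct, and while the comparison principle itself is proved by the same mechanism as in the paper, your derivation of the stability bound takes a genuinely different route. For the comparison part you use exactly the paper's idea: perturb to make all inequalities strict, then get a contradiction at the first touching point $t_0$ from the sign of the derivative there together with monotonicity of $\phi$ acting on the delayed differences. (The paper's perturbation is $\delta(1+t_+)$ with the positive part, so it equals the constant $\delta$ on the unbounded past; your objection to a "linear perturbation $\delta(1+t)$" is valid for that literal choice but does not apply to the paper's, and your $\delta e^t$ works just as well.) The substantive difference is that you prove the stronger \emph{two-function} principle $\cH[z_1]\ge\cH[z_2]$ on $(0,T]$ and $z_1\ge z_2$ on the past imply $z_1\ge z_2$, which needs only monotonicity of $\phi$, oddness entering solely through $\cH[0]=0$; you then get the stability estimate by comparing $z$ with the explicit barriers $\pm\bigl(\sup_{(-\infty,0)}|z_p|+\int_0^t|f|\bigr)$. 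The paper cannot argue this way, because it only establishes the one-function principle and $\cH$ is nonlinear; instead it sets $Z(t):=\sup_{\tau\le 0}|z_p(\tau)|+\int_0^t|f(\tau)|d\tau$ and verifies directly that $\cH[Z-|z|]\ge 0$ through a case analysis on $\sgn(z(t))$ in which oddness of $\phi$ is used essentially, before applying the one-function principle to $Z-|z|$. Your route buys a cleaner stability step (no case analysis and no manipulation of $|z|$ inside $\cH$) at the cost of formulating and proving the slightly more general comparison statement; both arguments share the same implicit regularity assumptions (existence of $\dot z$ wherever $\cH[z]$ is evaluated, convergence of the perturbed integrals), so there is no gap relative to the paper's own standard of rigor.
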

\begin{proof}
The comparison principle is, as usual, first shown for the case of strict inequalities. Thus, we assume $\cH[z](t)>0$, $t>0$, and $z(t)>0$, $t\le 0$. Let $t_0>0$ denote
the smallest zero of $z$. Then we arrive at the contradiction
$$
    \dot z(t_0) > -\int_0^\infty \phi\left(a,t_0,\frac{-z(t_0-\e a)}{\e}\right) da \ge 0 \,,
$$
implying $z>0$. The statement with non-strict inequalities is obtained in the standard way by an approximation argument:
For $\delta>0$ let $z_\delta := z + \delta(1+t_+)$. This implies
$$
   \cH[z_\delta](t) \ge \delta + \cH[z](t) \ge \delta > 0 \,,\quad t>0 \,,\qquad z_\delta(t) \ge \delta > 0 \,,\quad t\le 0 \,,
$$
giving $z \ge -\delta (1+t_+)$ by the argument with strict inequalities and $z\ge 0$ in the limit $\delta\to 0$.\\
Finally we define $Z(t) := \sup_{\tau\le 0}|z_p(\tau)| + \int_0^t |f(\tau)|d\tau$, $t>0$, and $Z(t) := \sup_{\tau\le t}|z_p(\tau)|$, $t\le 0$. This implies
\begin{align*}
  &\cH[Z-|z|](t) = |f(t)| - \sgn(z(t)) f(t) \\
  &+ \int_0^\infty \left( \sgn(z(t)) \phi\left(a,t,\frac{z(t) - z(t-\e a)}{\e}\right) + \phi\left(a,t,\frac{Z(t)-Z(t-\e a) - |z(t)| + |z(t-\e a)|}{\e}\right)\right)da \\
  &\ge \int_0^\infty \left( \sgn(z(t)) \phi\left(a,t,\frac{z(t) - z(t-\e a)}{\e}\right) + \phi\left(a,t,\frac{- |z(t)| + |z(t-\e a)|}{\e}\right)\right)da  \,,
\end{align*}
where we have used the monotonicities of $Z$ and of $\phi$. For $z(t)>0$ we use the oddness of $\phi$ and write the integrand on the right hand side as
$$
  \phi\left(a,t,\frac{z(t) - z(t-\e a)}{\e}\right) - \phi\left(a,t,\frac{z(t) - |z(t-\e a)|}{\e}\right) \ge 0 \,,
$$
by the monontonicity of $\phi$. For $z(t)<0$ the integrand reads
$$
  \phi\left(a,t,\frac{z(t) + |z(t-\e a)|}{\e}\right) - \phi\left(a,t,\frac{z(t) - z(t-\e a)}{\e}\right) \ge 0 \,,
$$
showing $\cH[Z-|z|](t)\ge 0$, $t>0$. Since obviously $Z(t) - |z(t)| \ge 0$ for $t\le 0$, an application of the comparison principle completes the proof.
\end{proof}
}

\section{The regular convex potential} \label{sec.reg}

{In this section the additional assumption $\psi\in C^{1,1}(\R)$ on the potential is used. 
We start with existence results for \eqref{eq.z.nl} and for the formal limit \eqref{eq.zz.nl} as $\e\to 0$.}

\begin{theorem}\label{thm.exist.uniq} Let Assumptions \ref{hypo.data} hold and {let furthermore $\psi'$ be Lipschitz on $\rr$.}
Then there exists a unique solution {$\zeps\in C^1(I_T)$} of problem \eqref{eq.z.nl}.
\end{theorem}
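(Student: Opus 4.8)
The plan is to obtain a unique local solution by a contraction argument, to promote it to a global one by means of the a priori estimate of Lemma~\ref{lemm:comp}, and to settle uniqueness by a Gronwall inequality. Throughout I would exploit that, since $\psi$ is even and convex, $\psi'$ is odd and nondecreasing with $\psi'(0)=0$; combined with the hypothesis that $\psi'$ is Lipschitz on $\rr$, this extends $\psi'$ to a globally Lipschitz, odd, nondecreasing function on all of $\RR$ with a single constant $L$, so that in particular $|\psi'(u)|\le L|u|$ for every $u\in\RR$.

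First I would recast \eqref{eq.z.nl} as the fixed-point equation
\begin{equation*}
	\cT[z](t) = \zp(0) + \int_0^t\left( v(s) - \int_\rr \psi'\!\left(\frac{z(s)-z(s-\e a)}{\e}\right)\kernel(a,s)\,da\right)ds \,, \qquad t\in[0,\tau] \,,
\end{equation*}
on $C([0,\tau])$, each competitor being extended by $\zp$ on $(-\infty,0]$. Writing $M:=\sup_{t\in I_T}\|\kernel(\cdot,t)\|_{L^1(\rr)}<\infty$ (finite because $\kernel\in C_B(I_T;L^1((1+a^2)da))$), the Lipschitz bound on $\psi'$ controls, for two competitors $z,\tilde z$ and for $s-\e a\ge 0$, the integrand difference by $\tfrac{L}{\e}(|z-\tilde z|(s)+|z-\tilde z|(s-\e a))$, while the contributions from $s-\e a\le 0$ cancel since both functions equal $\zp$ there. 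This yields $\|\cT[z]-\cT[\tilde z]\|_{\infty,[0,\tau]}\le \tfrac{2LM}{\e}\,\tau\,\|z-\tilde z\|_{\infty,[0,\tau]}$, so $\cT$ is a contraction as soon as $\tau<\e/(2LM)$. The decisive point is that this threshold depends only on $\e,L,M$ and \emph{not} on the size of the data, which is what makes the later continuation reach $T$ in finitely many steps. Banach's theorem then gives a unique $z\in C([0,\tau])$; inspecting the integrand upgrades it to $z\in C^1$, the continuity of $t\mapsto\int_\rr\psi'(\cdots)\kernel\,da$ following from dominated convergence together with the $L^1((1+a^2)da)$-continuity of $\kernel$ and the growth bound $|\psi'(u)|\le L|u|$.

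Next I would apply Lemma~\ref{lemm:comp} with $\phi(a,t,u):=\psi'(u)\,\kernel(a,t)$: this $\phi$ is measurable in $(a,t)$ and, because $\psi'$ is odd and nondecreasing and $\kernel\ge 0$, it is odd and nondecreasing in $u$, so the stability estimate applies and furnishes the uniform bound $|z(t)|\le \sup_{(-\infty,0)}|\zp| + \int_0^t|v(\tau)|\,d\tau$ on the whole existence interval. Feeding this back into the equation bounds the nonlinear term: for $s-\e a\ge 0$ the increment $|z(s)-z(s-\e a)|$ is controlled by twice the $L^\infty$ bound, and for $s-\e a<0$ by this bound plus $\sup|\zp|$, whence $|\dzeps(t)|\le \|v\|_{\infty}+\tfrac{C}{\e}M$ with $C$ depending only on $\sup|\zp|$, $\|v\|_{L^1}$ and $L$. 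Thus neither $z$ nor $\dzeps$ can blow up in finite time.

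Finally, since the local existence time $\tau$ is independent of the data, I would glue the solution over the successive intervals $[k\tau,(k+1)\tau]\cap I_T$, using the previously constructed $C^1$ function as the new past datum (which stays bounded and Lipschitz by the two a priori bounds), thereby covering all of $I_T$ in finitely many steps for $T<\infty$ and on every finite horizon for $T=\infty$. Global uniqueness comes from a Gronwall argument: the difference $w=z_1-z_2$ of two solutions vanishes on $(-\infty,0]$ and satisfies $|\dot w(t)|\le \tfrac{2LM}{\e}\sup_{[0,t]}|w|$, forcing $w\equiv 0$. The main obstacle is entirely concentrated in the factor $1/\e$ inside $\psi'$: it degrades every Lipschitz constant by $1/\e$ and renders the argument of $\psi'$ potentially of order $1/\e$ in the \emph{memory regime} $s-\e a<0$. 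The remedy is that this large argument is nonetheless \emph{bounded} — by the a priori $L^\infty$ control of $z$ and the boundedness of $\zp$ — and is integrated against the finite-mass, finite-second-moment kernel $\kernel$, so that all the relevant integrals remain finite uniformly in $t$.
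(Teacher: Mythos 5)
Your proposal is correct and follows essentially the same route as the paper's proof: the identical fixed-point map with the identical contraction estimate $\frac{2LM\tau}{\e}$ for local existence, the identical application of Lemma \ref{lemm:comp} with $\phi(a,t,u)=\kernel(a,t)\psi'(u)$ and $f=v$ for the global a priori bound, and continuation up to $T$. The additional details you spell out (the data-independent step size, the gluing over successive intervals, and the Gronwall argument for global uniqueness) are exactly the steps the paper declares ``completely standard'' and omits.
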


\begin{proof}
{Local existence will be proven by Picard iteration as for ODEs in the space $C([0,\tau])$ with $\tau>0$ small enough.
Since this is completely standard, we only prove the contraction property of the fixed point map
$$
    F[z](t) = z_p(0) + \int_0^t v(s)ds - \int_0^t\int_0^\infty \rho(a,s)\psi'\left(\frac{z(s)-z(s-\e a)}{\e}\right)da\,ds \,.
$$
Let $z_1,z_2\in C([0,\tau])$ with $z_1(t)=z_2(t)=z_p(t)$, $t\le 0$. Then we estimate
$$
   \left|F[z_1](t) - F[z_2](t)\right| \le \frac{2L'\tau}{\e} \sup_{s\in(0,T)}\int_0^\infty \rho(a,s)da \sup_{s\in (0,\tau)}|z_1(s)-z_2(s)| \,,
$$
with the Lipschitz constant $L'$ of $\psi'$, showing that $F$ is a contraction for $\tau$ small enough. Existence on $[0,T]$ follows from the
a priori estimate
$$
   |\zeps(t)| \le \sup_{(-\infty,0)}|z_p| + \int_0^t |v(s)|ds \,,
$$
obtained by an application of Lemma \ref{lemm:comp} with $\phi(a,t,u)= \rho(a,t)\psi'(u)$ and $f=v$. Continuous differentiability of $\zeps$ follows
from the continuity of $v$ and $\rho$ with respect to $t$.
}
\end{proof}

\begin{lemm}\label{lem.exist.zz}
	{Let the assumptions of Theorem \ref{thm.exist.uniq} hold.
	Then there exists a unique solution $\zz\in C^1(I_T)$ of \eqref{eq.zz.nl}.}
\end{lemm}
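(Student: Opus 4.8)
The plan is to exploit the fact that \eqref{eq.zz.nl} is a decoupled \emph{implicit scalar ODE} for $\dzz$: for $t>0$ the past data $\zp$ does not enter the equation, so it suffices to solve the pointwise algebraic equation for the velocity $\dzz(t)$ and then integrate. For fixed $t\in I_T$ I would introduce
$$
G(w,t) := w + \int_\rr \psi'(a w)\,\kernel(a,t)\,da, \qquad w\in\R.
$$
First one checks that $G$ is well defined. Since $\psi$ is even and convex, $\psi'$ is odd and nondecreasing with $\psi'(0)=0$; combined with the Lipschitz bound on $\rr$ this gives the global estimate $|\psi'(s)|\le L'|s|$, whence $|\psi'(aw)|\le L'|w|\,a$ and the integrand is dominated by $L'|w|\,a\,\kernel(a,t)$. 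This is integrable because $\kernel(\cdot,t)\in L^1((1+a^2)da)$ controls the first moment $\int_\rr a\,\kernel(a,t)\,da$ (using $a\le\frac12(1+a^2)$).

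Next comes the structural step: $G(\cdot,t)$ is a strictly increasing bijection of $\R$. Monotonicity of $\psi'$ gives, for $w_1\le w_2$,
$$
G(w_2,t)-G(w_1,t) = (w_2-w_1) + \int_\rr\bigl(\psi'(aw_2)-\psi'(aw_1)\bigr)\kernel(a,t)\,da \ge w_2-w_1,
$$
since each difference under the integral is nonnegative for $a>0$. Hence $G(\cdot,t)$ is injective with the one-sided bound $|G(w_2,t)-G(w_1,t)|\ge|w_2-w_1|$; moreover $\psi'(s)\ge0$ for $s\ge0$ yields $|G(w,t)|\ge|w|$ and $G(w,t)\to\pm\infty$ as $w\to\pm\infty$, so by continuity $G(\cdot,t)$ is onto. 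Therefore there is a unique $w(t)\in\R$ with $G(w(t),t)=v(t)$, and the bound $|G(w,t)|\ge|w|$ gives $|w(t)|\le|v(t)|$, so $w$ is locally bounded.

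The main work, and the point where only-Lipschitz regularity of $\psi'$ forbids a direct implicit-function-theorem argument, is to show that $t\mapsto w(t)$ is continuous; here the one-sided Lipschitz bound above replaces $\partial_w G$. For $s,t\in I_T$ I would write
$$
|w(t)-w(s)| \le |G(w(t),t)-G(w(s),t)| \le |v(t)-v(s)| + |G(w(s),t)-G(w(s),s)|,
$$
using $G(w(t),t)=v(t)$, $G(w(s),s)=v(s)$ and the one-sided bound. The first term tends to $0$ as $s\to t$ by $v\in C^1(I_T)\subset C^0$. For the second, $G(w(s),t)-G(w(s),s)=\int_\rr \psi'(aw(s))\bigl(\kernel(a,t)-\kernel(a,s)\bigr)\,da$ is bounded by $L'|w(s)|\int_\rr a\,|\kernel(a,t)-\kernel(a,s)|\,da$, and since $w$ is locally bounded while $\kernel\in C_B(I_T;L^1((1+a^2)da))$ controls this first-moment norm, continuity of $\kernel$ in $t$ makes it vanish as $s\to t$.

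This proves $w\in C^0(I_T)$, so that $\zz(t):=\zp(0)+\int_0^t w(\tau)\,d\tau$ lies in $C^1(I_T)$ and, by construction, solves \eqref{eq.zz.nl} with $\zz(0)=\zp(0)$. Uniqueness is then immediate: any two $C^1$ solutions have velocities that both solve the pointwise equation $G(\cdot,t)=v(t)$, whose solution is unique, and they share the value $\zp(0)$ at $t=0$, hence they coincide. I expect the continuity of $w$ to be the only genuinely delicate point, precisely because $\psi'$ need not be differentiable.
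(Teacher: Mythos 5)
Your proposal is correct and follows essentially the same route as the paper: solve the pointwise algebraic (implicit ODE) equation for $\dzz(t)$ via monotonicity of $\psi'$ and $\psi'(0)=0$, obtain the stability bound $|\dzz(t)|\le|v(t)|$, and deduce continuity of the velocity from the Lipschitz continuity of $\psi'$ together with the continuity of $\kernel$ in $C(I_T;L^1(a\,da))$, then integrate. Your write-up merely makes explicit (via the strong-monotonicity bound $|G(w_2,t)-G(w_1,t)|\ge|w_2-w_1|$) the continuity step that the paper states tersely.
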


\begin{proof}
This is an initial value problem for an implicit ODE. The monotonicity of $\psi'$ and $\psi'(0)=0$ imply existence and uniqueness of $\dzz(t)$ as well as 
the stability estimate $|\dzz(t)|\le |v(t)|$. By the Lipschitz continuity of $\psi'$ and by $\rho\in C(I_T; L^1(a\,da))$, the left hand side of \eqref{eq.zz.nl} is
continuous as a function of $\dzz(t)$ and $t$. This and the stability estimate imply continuity of $\dzz$, completing the proof.
\end{proof}

\newcommand{\tiuze}{\ti{u}_{0,\e}}

Now we are in the position to prove a convergence result.
\begin{theorem}\label{thm:eps20}
Let the assumptions of Theorem \ref{thm.exist.uniq} hold. Then $\lim_{\e\to 0} \zeps = \zz$ uniformly in bounded subsets of $I_T$.
\end{theorem}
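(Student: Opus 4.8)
The plan is to control the error $e_\e := \zeps - \zz$ by the stability estimate of Lemma~\ref{lemm:comp}. Since $\zz(t)=\zp(t)=\zeps(t)$ for $t\le 0$, we have $e_\e\equiv 0$ on $(-\infty,0]$, so only a consistency error on $(0,T]$ will drive the estimate. First I would insert $\zz$ into the operator governing $\zeps$, i.e. I would evaluate $\cH_\e[\zz]-v$, where $\cH_\e[z](t):=\dot z(t)+\int_0^\infty \psi'\!\big((z(t)-z(t-\e a))/\e\big)\kernel(a,t)\,da$. Using \eqref{eq.zz.nl} this produces the consistency residual
$$ r_\e(t):=\int_0^\infty\Big[\psi'\Big(\tfrac{\zz(t)-\zz(t-\e a)}{\e}\Big)-\psi'(a\dzz(t))\Big]\kernel(a,t)\,da , $$
so that $\cH_\e[\zz]=v+r_\e$ while $\cH_\e[\zeps]=v$ by definition of $\zeps$.

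Next I would subtract the two identities and linearize in order to recover the structure of Lemma~\ref{lemm:comp}. Writing $\psi'(x)-\psi'(y)=m(x,y)\,(x-y)$ with $m(x,y)=\int_0^1\psi''(y+\theta(x-y))\,d\theta\in[0,L']$ (legitimate since $\psi$ is convex with $\psi'$ Lipschitz, hence $0\le\psi''\le L'$ a.e.), the difference $\cH_\e[\zeps]-\cH_\e[\zz]$ becomes a \emph{linear} integro-differential operator applied to $e_\e$, namely
$$ \dot e_\e(t)+\int_0^\infty \mu_\e(a,t)\,\frac{e_\e(t)-e_\e(t-\e a)}{\e}\,da=-\,r_\e(t), $$
with a nonnegative, $(a,t)$-measurable coefficient $\mu_\e:=m(\cdot,\cdot)\kernel\ge 0$. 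This has exactly the form required in Lemma~\ref{lemm:comp}, with $\phi(a,t,u)=\mu_\e(a,t)\,u$ odd and nondecreasing in $u$. Applying the stability estimate with vanishing past data and source $-r_\e$ yields, for every finite $T_0\le T$,
$$ \sup_{t\in[0,T_0]}|e_\e(t)|\le \int_0^{T_0}|r_\e(\tau)|\,d\tau ; $$
the bound is already uniform in $t$, so it suffices to show the right-hand side tends to $0$ as $\e\to 0$.

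It then remains to estimate $r_\e$. By the Lipschitz bound on $\psi'$, $|r_\e(t)|\le L'\int_0^\infty |g_\e(a,t)|\kernel(a,t)\,da$ with $g_\e(a,t):=\frac{\zz(t)-\zz(t-\e a)}{\e}-a\dzz(t)=\frac1\e\int_{t-\e a}^t\big(\dzz(s)-\dzz(t)\big)ds$. Since $\zz$ is globally Lipschitz on $(-\infty,T_0]$, with constant $M:=\max(\|v\|_{L^\infty(0,T_0)},L_p)$ — using $|\dzz|\le|v|$ on $(0,T]$ from Lemma~\ref{lem.exist.zz} and the Lipschitz bound of $\zp$ on $(-\infty,0]$ — one gets the domination $|g_\e(a,t)|\le 2Ma$, and $a\kernel(a,t)$ is integrable over $[0,T_0]\times\rr$ because $\kernel\in C_B(I_T;L^1((1+a^2)da))$. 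For a.e. $(a,t)$ with $t>0$ the $C^1$ regularity of $\zz$ near $t$ gives $g_\e(a,t)\to 0$, so dominated convergence in $(a,t)$ forces $\int_0^{T_0}|r_\e|\,d\tau\to 0$ and closes the argument. The main obstacle is precisely this consistency step: making $g_\e\to 0$ rigorous in the regime of large $a$, where $t-\e a$ falls into the past-data interval and $\dzz$ is only defined a.e.; this is why I route the convergence through an $L^1$-in-$(a,t)$ dominated-convergence argument rather than pointwise estimates in $t$.
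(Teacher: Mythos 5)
Your proof is correct and follows essentially the same route as the paper: rewrite the difference $\zeps-\zz$ as a linear integro-differential equation with a nonnegative kernel (using $0\le\psi''\le L'$ a.e.), apply the stability estimate of Lemma \ref{lemm:comp} with zero past data, and show the consistency residual vanishes in $L^1(0,T_0)$. The only immaterial differences are that the paper linearizes around $a\dzz(t)$ rather than around the difference quotient of $\zz$, and it estimates the residual via an explicit modulus-of-continuity plus tail splitting, whereas you obtain the same conclusion by dominated convergence in the product variable $(a,t)$.
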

\begin{proof}
A straightforward computation shows that the difference between \eqref{eq.z.nl} and \eqref{eq.zz.nl} can be written as a linearized problem for the error
$\hz := \zeps -\zz$:
$$
	\cle [\hz](t) = \cR_\e(t) \,,\quad t>0 \,;\qquad \hz(t) = 0 \,,\quad t\le 0 \,,
$$
with
\begin{eqnarray*}
	\cle[z](t) &:=& \dot{z}(t) +  \int_\rr k_\e(a,t) \frac{z(t) - z(t-\e a)}{\e}da \,,\\
	k_\e(a,t) &:=& \kernel(a,t)\int_0^1 \psi''\left(s\veps(a,t)  + (1-s)\vz(a,t)\right) ds \,,
\end{eqnarray*}
and with 
\begin{equation}\label{R.eps}
   \cR_\e(t) = \int_0^\infty k_\e(a,t)a\left( \dzz(t) - \frac{\zz(t) - \zz(t-\e a)}{\e a}\right)da \,.
\end{equation}
Since $\psi''\ge 0$, Lemma \ref{lemm:comp} (with $\phi(a,t,u) = k_\e(a,t)u$) can be applied to the linearized problem, giving $|\hz(t)| \le \int_0^t \cR_\e(\tau)d\tau$.
It remains to estimate \eqref{R.eps}. We start with
\begin{eqnarray*}
   \left| \dzz(t) - \frac{\zz(t) - \zz(t-\e a)}{\e a}\right| &\le& \frac{1}{\e a} \int_{t-\e a}^t \left| \dzz(t) - \dzz(s)\right| ds \\
  &\le& \frac{1}{\e a} \int_{t-\e a}^t \left| \dzz(t) - \dzz(s_+)\right| ds + \frac{1}{\e a} \int_{t-\e a}^t \left| \dzz(s_+) - \dzz(s)\right| ds \,.
\end{eqnarray*}
In the first term on the right hand side we use the modulus of continuity $\omega_t$ of $\dzz$ on the interval $[0,t]$. In the second term the integrand is bounded
by Assumption \ref{hypo.data} (ii). Thus,
$$
  \left| \dzz(t) - \frac{\zz(t) - \zz(t-\e a)}{\e a}\right| \le \omega_t(\e a) + \left(|\dzz(0+)| + L_p\right) {\bf 1}_{t-\e a < 0} \,.
$$
Since $k_\e \le L'\rho$ (with the Lipschitz constant $L'$ of $\psi'$ already used above),
\begin{eqnarray*}
  |\cR_\e(t)| &\le& L' \int_0^\infty a\rho(a,t) \omega_t(\e a)da + L' \left(|\dzz(0+)| + L_p\right) \int_{t/\e}^\infty a\rho(a,t)da \\
  &\le& L' \int_0^\infty a\rho(a,t) \omega_t(\e a)da + L' \left(|\dzz(0+)| + L_p\right) \int_0^\infty (1+a)a\rho(a,t)da  \frac{\e}{\e + t}\,.
\end{eqnarray*}
The result follows by integration with respect to $t$ and by using the dominated convergence theorem for the first term on the right hand side.
\end{proof}
\renewcommand{\rhoinf}{\varrho_\infty}
\newcommand{\dzi}{\dot{z}_\infty}
\renewcommand{\hv}{\hat{v}}
The rest of this section is concerned with large time asymptotics. For notational simplicity
the parameter $\varepsilon$ is set to 1, whence \eqref{eq.z.nl} reads
\begin{equation}\label{eq.z1.nl}
 \begin{aligned}
 & \dot z(t) + \int_\rr \psi'\left( z(t)-z(t-a)\right) \kernel(a,t) da= v(t)  \,, & t >0 \,,\\
 & z(t) = \zp(t) \,, &t\le 0 \,.
\end{aligned}
\end{equation}
First we prove that for large time the velocity becomes approximately constant. For the time dependent data, a weak convergence assumption is sufficient, 
in the sense that the difference between the data and its asymptotic limit is integrable up to $t=\infty$.

\begin{theorem}\label{thm:t.inf.vinf.ne.0}
Let the assumptions of Theorem \ref{thm.exist.uniq} hold with $T=\infty$ and let $\vinf\in\R$ and $\rhoinf\in L^1((1+a)da)$ satisfy
$$
  \int_0^\infty |v(t)-\vinf| dt < \infty  \,,\qquad \int_0^\infty \int_0^\infty  a|\kernel(a,t)-\rhoinf(a)| \; da \;dt < \infty \,.
$$
Then there exists a unique solution $\dzi\in\R$ of \eqref{eq.lim.t.large},  such that the solution $z$ of \eqref{eq.z1.nl} satisfies
$$
   z(t) = \dzi t + O(1) \,,\qquad \mbox{as } t\to\infty \,.
$$
\end{theorem}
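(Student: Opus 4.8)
The statement has two parts, and I would obtain both by the linearisation-plus-comparison scheme already used in the proof of Theorem~\ref{thm:eps20}.

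\emph{Existence of $\dzi$.} I would set $G(\gamma):=\gamma+\int_0^\infty\psi'(a\gamma)\rhoinf(a)\,da$. Since $\psi'$ is odd, nondecreasing and Lipschitz (constant $L'$) and $\rhoinf\in L^1((1+a)da)$, the integral is finite and $G$ is continuous; as $\gamma\mapsto\psi'(a\gamma)$ is nondecreasing for $a\ge0$ while the leading term is strictly increasing with $G(\gamma)\to\pm\infty$, the map $G$ is a strictly increasing bijection of $\R$. Hence $\dzi:=G^{-1}(\vinf)$ is the unique solution of \eqref{eq.lim.t.large}, requiring only the first moment of $\rhoinf$.

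\emph{Reduction to an integrable residual.} The plan is to compare $z$ with the reference $\ov z(t):=\zp(t)$ for $t\le0$ and $\ov z(t):=\zp(0)+\dzi t$ for $t>0$. This $\ov z$ is continuous and agrees with $z$ on the past, so the error $\eta:=z-\ov z$ vanishes on $(-\infty,0]$. Writing $\cH[z]-\cH[\ov z]$ with $\phi(a,t,u)=\kernel(a,t)\psi'(u)$ and applying the mean value theorem (licit since $\psi\in C^{1,1}$) linearises the problem into $\cL[\eta]=r$, where $\cL[\eta](t):=\dot\eta(t)+\int_0^\infty k(a,t)(\eta(t)-\eta(t-a))\,da$ with $0\le k\le L'\kernel$, and $r:=v-\cH[\ov z]$. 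Because $\cL$ is covered by Lemma~\ref{lemm:comp} (take $\phi(a,t,u)=k(a,t)u$), the stability estimate yields $|\eta(t)|\le\int_0^t|r(\tau)|\,d\tau$. It therefore suffices to prove $r\in L^1(0,\infty)$, which gives $|z(t)-\dzi t-\zp(0)|\le\|r\|_{L^1}$, i.e. $z(t)=\dzi t+O(1)$.

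\emph{Estimating $r$.} Using \eqref{eq.lim.t.large} to substitute $\vinf$, one decomposes $r=(v-\vinf)-A-B$, where $A(t)=\int_0^t(\kernel-\rhoinf)\psi'(\dzi a)\,da$ handles ages $a<t$ and $B(t)$ collects the ``old linkage'' terms over $a>t$, i.e. linkages formed before $t=0$, for which $\ov z(t)-\ov z(t-a)=\zp(0)+\dzi t-\zp(t-a)$. The contributions $v-\vinf$ and $A$ are integrable directly from the hypotheses (for $A$ via $|\psi'(\dzi a)|\le L'|\dzi|a$ and $\int_0^\infty\!\int_0^\infty a|\kernel-\rhoinf|<\infty$). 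In $B=\int_t^\infty(\kernel-\rhoinf)\psi'(\ov z(t)-\ov z(t-a))\,da+\int_t^\infty\rhoinf\bigl[\psi'(\ov z(t)-\ov z(t-a))-\psi'(\dzi a)\bigr]da$ the first piece is integrated in $t$ by using $t<a$ to trade powers of $t$ for powers of $a$ (plus the uniform $L^1$-bound of $\kernel$ on bounded time windows for the small-$a$ part). The genuinely delicate piece is the stretch mismatch in the second integral, where the cell's bounded past does not match the eternal drift $\dzi a$ implicit in \eqref{eq.lim.t.large}. Bounding its integrand by $L'\rhoinf\bigl[\min(2M,L_p(a-t))+|\dzi|(a-t)\bigr]$, with $M=\sup_{(-\infty,0]}|\zp|$ and $L_p$ the Lipschitz constant of $\zp$, the $\min$-part integrates to $\le 2M\int_0^\infty a\rhoinf$, while the drift part integrates to $\tfrac{|\dzi|}2\int_0^\infty a^2\rhoinf$.

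I expect this final term to be the main obstacle, since $\int_0^\infty a^2\rhoinf$ is not among the hypotheses. The point that saves the argument is that it is nonetheless finite: from $\int_0^\infty\!\int_0^\infty a|\kernel-\rhoinf|<\infty$ there is a sequence $t_n\to\infty$ with $\kernel(\cdot,t_n)\to\rhoinf$ in $L^1(a\,da)$, hence a.e. along a subsequence, while $\sup_t\int_0^\infty a^2\kernel(a,t)\,da=:K_2<\infty$ by the standing assumption $\kernel\in C_B(I_T;L^1((1+a^2)da))$; Fatou's lemma then gives $\int_0^\infty a^2\rhoinf\le K_2$. With every piece of $r$ absolutely integrable in time, the comparison estimate closes and $z(t)=\dzi t+O(1)$ follows. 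The whole difficulty is thus concentrated in controlling the force exerted by very old linkages, and in recognising that the needed second moment of $\rhoinf$ is inherited from that of $\kernel$.
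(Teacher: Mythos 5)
Your proof is correct, and its skeleton is the paper's: uniqueness of $\dot{z}_\infty$ from strict monotonicity of $\gamma\mapsto\gamma+\int_0^\infty\psi'(a\gamma)\varrho_\infty(a)\,da$, linearization of the error around the affine asymptote via $\psi\in C^{1,1}$, and the stability estimate of Lemma~\ref{lemm:comp} with $\phi(a,t,u)=k(a,t)u$, reducing everything to time-integrability of a residual. The genuine difference is in how the residual is set up, and there your version is actually \emph{more} complete than the paper's. The paper puts $\hat z(t)=z(t)-\dot{z}_\infty t-z_p(0)$ for $t>0$, $\hat z\equiv 0$ for $t\le 0$, and asserts $\mathcal{L}[\hat z]=\mathcal{R}$ with $\mathcal{R}=(v-v_\infty)-\int_0^\infty\psi'(a\dot{z}_\infty)(\varrho-\varrho_\infty)\,da$; but with $u(a,t)=z(t)-z(t-a)$ this identity only holds for the contributions with $a<t$, since for $a\ge t$ one has $\hat z(t)-\hat z(t-a)=u(a,t)-a\dot{z}_\infty+\bigl(z_p(t-a)-z_p(0)+\dot{z}_\infty(a-t)\bigr)$, so the term
$$
\int_t^\infty k(a,t)\bigl(z_p(t-a)-z_p(0)+\dot{z}_\infty(a-t)\bigr)\,da
$$
is silently dropped from the paper's $\mathcal{R}$ and from its final estimate. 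Your choice of residual $r=v-\mathcal{H}[\overline{z}]$, with the reference trajectory $\overline{z}$ glued to the true past $z_p$, makes the mean-value linearization exact for \emph{all} ages and forces you to confront precisely this old-linkage contribution (your term $B$). Its time integral costs $\int_0^\infty a^2\varrho_\infty(a)\,da$, a quantity absent from the theorem's explicit hypotheses, and your Fatou argument recovering it from the uniform second-moment bound in Assumptions~\ref{hypo.data}\,(iv) is exactly the ingredient needed to close this step. So: same route and same key lemma, but your decomposition of the residual repairs what the paper's ``straightforward computation'' passes over, at the modest price of invoking the standing assumption $\varrho\in C_B(I_T;L^1((1+a^2)da))$, which the paper's shorter (but incomplete) estimate never needs.
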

\newcommand{\hrho}{\hat{\varrho}}
\newcommand{\cl}{{\mathcal L}}
\begin{proof}
Existence and uniqueness for \eqref{eq.lim.t.large} follows as in the proof of Lemma \ref{lem.exist.zz}, and we denote the solution
by $\dzi:= \gamma$. A straightforward computation shows that $\hat z$, defined by $\hat z(t) := z(t) - \dzi t - z_p(0)$, $t>0$, and $\hat z(t) = 0$, $t\le 0$, satisfies the
linearized equation $\cL[\hat z](t) = \cR(t)$, $t>0$, with
\begin{eqnarray*}
   \cL[z](t) &=& \dot z(t) + \int_0^\infty k(a,t)(z(t)-z(t-a)) da \,,\quad k(a,t) = \rho(a,t) \int_0^1 \psi''(su(a,t)+(1-s)a\dzi)ds \,,\\
   \cR(t) &=& v(t)-v_\infty - \int_0^\infty \psi'(a\dzi)(\rho(a,t) - \rho_\infty(a))da \,.
\end{eqnarray*}
Lemma \ref{lemm:comp} can be applied with $\e=1$, $\phi(a,t,u)=k(a,t)u$, and $f = \cR$, to show that, for any $t>0$,
\begin{eqnarray*}
    |z(t) - \dzi t| &\le& |z_p(0)| + |\hat z(t)| \le |z_p(0)| + \int_0^\infty |\cR(\tau)|d\tau \\
    &\le& |z_p(0)| + \int_0^\infty |v(\tau)-v_\infty| d\tau + L'|\dzi| \int_0^\infty \int_0^\infty a|\rho(a,\tau)-\rho_\infty(a)|da\,d\tau \,,
\end{eqnarray*}
completing the proof.
\end{proof}

\renewcommand{\e}{\varepsilon}
\renewcommand{\dz}{\dot{z}}
An improvement of this result, i.e. convergence of $\dot z(t)$ and of $z(t)- \dzi t$, can be achieved under additional assumptions, in particular for vanishing flow velocity
$v$.

\begin{propm}\label{prop.dec.caract}
Let the assumptions of Theorem \ref{thm.exist.uniq} hold with $T=\infty$ and let $v\equiv 0$. Let $\kernel$ satisfy 
$0 \ge (\dt + \da) \kernel\in (L^\infty \cap L^1)(\rr\times\rr)$ and $0\le \kernel(0,t)\in L^\infty(\rr)$. Furthermore let
there exist $\kernel_\infty \in L^1(\rr,(1+a))$ such that $\kernel(\cdot,t) \to \kernel_\infty$ in $L^1(\rr,(1+a))$.
Then the solution of \eqref{eq.z1.nl}
satisfies
{
$$
\int_0^\infty | \dot{z} (t) |^2 dt \leq \int_\rr \kernel (a,0)\psi(z_p(0)-z_p(-a)) da
$$}
and 
$
\lim_{t\to\infty} \dot{z}(t) = 0.
$
\end{propm}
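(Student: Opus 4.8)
The plan is to uncover the (delayed) gradient-flow structure of \eqref{eq.z1.nl} through the energy
$$
  E(t) := \int_\rr \kernel(a,t)\,\psi\bigl(z(t)-z(t-a)\bigr)\,da \,,
$$
and to extract the dissipation $|\dot z|^2$ from it. Writing $u(a,t):=z(t)-z(t-a)$, the decisive observation is the transport identity $(\dt+\da)\psi(u)=\psi'(u)\,\dot z(t)$, which follows from $\da u=\dot z(t-a)$ and $\dt u=\dot z(t)-\dot z(t-a)$. First I would differentiate $E$, splitting $\dt[\kernel\psi(u)]=(\dt+\da)[\kernel\psi(u)]-\da[\kernel\psi(u)]$; the last term integrates to the boundary contribution $[\kernel\psi(u)]_{a=0}^{a=\infty}$, which vanishes at $a=0$ because $u(0,t)=0$ and $\psi(0)=0$, and at $a=\infty$ by the decay forced by $\kernel(\cdot,t)\in L^1((1+a^2)\,da)$ together with the boundedness of $\psi(u)$. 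Applying the transport identity to the remaining piece and then using that \eqref{eq.z1.nl} with $v\equiv0$ reads $\int_\rr\kernel\,\psi'(u)\,da=-\dot z(t)$, I obtain
$$
  \frac{dE}{dt} = \int_\rr (\dt+\da)\kernel\;\psi(u)\,da - |\dot z(t)|^2 \le -|\dot z(t)|^2 \,,
$$
the inequality being a consequence of $(\dt+\da)\kernel\le0$ and $\psi\ge0$.

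Integrating this on $(0,T)$ and discarding $E(T)\ge0$ gives $\int_0^T|\dot z|^2\,dt\le E(0)$, and since $z=z_p$ on $(-\infty,0]$ one has $E(0)=\int_\rr\kernel(a,0)\psi(z_p(0)-z_p(-a))\,da$, which is precisely the first claimed estimate after letting $T\to\infty$. In particular this yields $\dot z\in L^2(0,\infty)$.

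For the limit $\dot z(t)\to0$ I would combine $\dot z\in L^2$ with uniform continuity of $\dot z$ and invoke Barbalat's lemma. Lemma \ref{lemm:comp} (with $f=v\equiv0$) gives $|z(t)|\le\sup_{(-\infty,0)}|z_p|=:M$, and then $\psi'(0)=0$ with the Lipschitz bound $|\psi'(u)|\le L'|u|\le2ML'$ yields the uniform bound $|\dot z(t)|\le 2ML'\sup_t\|\kernel(\cdot,t)\|_{L^1}=:C$. Differentiating $\dot z(t)=-\int_\rr\kernel\,\psi'(u)\,da$, substituting $\dt\kernel=(\dt+\da)\kernel-\da\kernel$, integrating the $\da\kernel$-term by parts (with $\da u=\dot z(t-a)$ and again vanishing boundary terms), the delay contributions cancel and one is left with the clean formula
$$
  \ddot z(t) = -\int_\rr (\dt+\da)\kernel\;\psi'(u)\,da - \dot z(t)\int_\rr\kernel\,\psi''(u)\,da \,.
$$
The second term is bounded uniformly by $C L'\sup_t\|\kernel(\cdot,t)\|_{L^1}$, while the first is bounded by $2ML'\,h(t)$ with $h(t):=\int_\rr|(\dt+\da)\kernel(a,t)|\,da\in L^1(0,\infty)$. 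Hence $|\dot z(t)-\dot z(s)|\le\int_s^t|\ddot z|\le C_2|t-s|+2ML'\int_s^t h$, and the absolute continuity of the Lebesgue integral of $h$ makes the right-hand side small uniformly in $t,s$ as $|t-s|\to0$. Thus $\dot z$ is uniformly continuous, and Barbalat's lemma gives $\dot z(t)\to0$.

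The step I expect to be the main obstacle is making the above manipulations rigorous under the weak regularity hypotheses: only the characteristic derivative $(\dt+\da)\kernel$ is controlled, so $\dt\kernel$ and $\da\kernel$ are not individually available, and the existence of $\ddot z$ is not granted a priori. The point to stress is that, thanks to the integration by parts in $a$, both the differentiation of $E$ and the expression for $\ddot z$ depend only on $(\dt+\da)\kernel$, which is exactly what the assumptions provide; I would therefore justify these identities by mollifying $\kernel$ along the characteristic direction and passing to the limit, using $\kernel(0,t)\in L^\infty$ and $\kernel(\cdot,t)\in L^1((1+a^2)\,da)$ to control the regularized boundary terms and to pass $\dt\kernel,\da\kernel$ through the limit so that only their controlled sum survives.
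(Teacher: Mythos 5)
Your proof is correct, and while your energy/dissipation argument for the $L^2$ bound is essentially identical to the paper's (same transport identity $(\dt+\da)\psi(u)=\psi'(u)\dot z$, same use of $(\dt+\da)\kernel\le 0$ and of the equation with $v\equiv 0$, same boundary terms), your treatment of $\lim_{t\to\infty}\dot z(t)=0$ takes a genuinely different route. The paper works directly on the force integral: writing $\dot z(t)=-\int_\rr \kernel(a,t)\psi'(u(a,t))\,da$, it controls $u(a,t)=\int_{t-a}^t\dot z$ by Cauchy--Schwarz and the $L^2$ bound, splits the integral at $a=t$, and passes to the limit by dominated convergence, using crucially the hypothesis $\kernel(\cdot,t)\to\kernel_\infty$ in $L^1((1+a)\,da)$. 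You instead prove uniform continuity of $\dot z$ via the bound $|\ddot z(t)|\le 2ML'h(t)+C_2$ with $h(t)=\|(\dt+\da)\kernel(\cdot,t)\|_{L^1_a}\in L^1(0,\infty)$, and invoke the $L^2$ variant of Barbalat's lemma; this uses the time-integrability of $(\dt+\da)\kernel$ (which the paper's limit argument never touches) but does not use the convergence of $\kernel(\cdot,t)$ to $\kernel_\infty$ at all, so each proof leans on a different subset of the stated hypotheses, and yours shows that the conclusion $\dot z\to 0$ survives without any asymptotic profile for the kernel. Your formula for $\ddot z$ is correct as a formal identity, and the regularity caveat you flag (only $(\dt+\da)\kernel$ is controlled, $\ddot z$ not granted a priori) is real but fixable exactly as you suggest; in fact you can bypass the second derivative and the integration by parts at $a=\infty$ entirely by estimating difference quotients along characteristics: comparing the points $(b,t)$ and $(b+\tau,t+\tau)$ gives $\kernel(b+\tau,t+\tau)-\kernel(b,t)=\int_0^\tau(\dt+\da)\kernel(b+s,t+s)\,ds$ and $u(b+\tau,t+\tau)-u(b,t)=z(t+\tau)-z(t)$, whence $|\dot z(t+\tau)-\dot z(t)|\le 2ML'\int\!\!\int_0^\tau|(\dt+\da)\kernel|\,ds\,db+L'C\tau\sup_t\|\kernel(\cdot,t)\|_{L^1_a}+O(\tau^2)$, which is the uniform continuity you need with no appeal to $\da\kernel$, $\dt\kernel$, or boundary terms at infinity.
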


\begin{proof}
Setting $u(a,t):=z(t)-z(t-a)$, the function $\psi(u(a,t))$ solves the transport problem 
$$
(\dt + \da) \psi(u)  = \psi'(u(a,t)) \dot{z},\quad \psi(u(0,t))=0 \quad \text{and}\quad \psi(u(a,0))=\psi(\vepsi(a)) \,.
$$ 
{with $\vepsi(a):=z_p(0)-z_p(-a)$. This connection between the delay equation and age structured population models has already been used
in \cite{MiOel.1}, see also \cite{Diekmann}.} Considering   $\kernel(a,t) \psi(u(a,t))$, it solves in the sense of characteristics (cf  \cite[Theorem 2.1 and Lemma 2.1]{MiOel.1})~:
$$
 (\dt + \da) \kernel  \psi(u)   - ((\dt + \da )\kernel)  \psi(u) =  \kernel \psi'(u(a,t)) \dot{z} ,
$$ 
integrated in age this gives :
$$
\ddt{}  \int_\rr \kernel(a,t)  \psi(u(a,t) )  da \leq   \int_\rr \kernel \psi'(u(a,t)) da \dot{z} = - \dot{z}^2,
$$
which then leads to~:
$$
\left[  \int_\rr \kernel(a,t)  \psi(u(a,t)) da \right]_{s=0}^{s=t} + \int_0^t \dot{z}^2 ds \leq 0.
$$
This shows that $\dot{z}$ belongs to $L^2(\rr)$ since 
$$
\nrm{\dot{z}}{L^2(\rr)}^2 \leq \int_\rr \psi(\vepsi) \kernel(a,0) da < \infty. 
$$
{With the formula $u(a_0,t) = \int_{t-a_0}^t \dot{z} (\tau) d\tau$, $a_0<t$, the Cauchy-Schwarz inequality implies}
$$
| u(a_0,t) | \leq \sqrt{a_0} \nrm{\dot{z}}{L^2(t-a_0,t)}.
$$
Using Lebesgue's Theorem, it is easy to show that $\lim_{t\to \infty} \nrm{\dot{z}}{L^2(t-a_0,t)} =0$.
Thanks to  Lebesgue's Theorem again, one shows that
$$
\int_0^t \rhoinf(a) | u(a,t) | da \to 0
$$
when $t$ grows large. By hypothesis, $\psi'(0)=0$, so that 
$$
\left| \int_0^t \psi'(u(a,t)) \rhoinf(a)da \right| \leq \nrm{\psi''}{L^\infty(\RR)} \int_0^t | u(a,t) |\rhoinf(a) da,
$$
which shows that the left hand side also tends to zero as $t$ tends to infinity.

In order to study the convergence of $\int_\rr \kernel(a,t) \psi'(u(a,t)) da$ when $t$ goes to infinity, 
we split the integral in two parts :
$$
\int_\rr \psi'(u) \kernel(a,t) da = \left( \int_0^t + \int_t^\infty \right) \psi'(u) \kernel(a,t) da =: I_1 + I_2.
$$
For the first part one has :
$$
I_1 = \int_0^t \psi'(u) \left(\kernel(a,t) - \rhoinf(a) \right)da + \int_0^t \psi'(u) \rhoinf (a) da
$$
The last term is already estimated above and tends to zero when $t$ goes large.
For the first one, as $\psi'(0)=0$, one has 
$$
\begin{aligned}
\int_0^t & \psi'(u) (\kernel(a,t) -\rhoinf(a)) da 
  \leq \nrm{\psi''}{L^\infty} \nrm{\frac{u}{\sqrt{1+a}}}{L^\infty(0,t)} \nrm{(1+a)(\kernel(\cdot,t)-\rhoinf)}{L^1(\rr)}\\
\end{aligned}
$$
the latter term vanishing when $t$ grows by hypothesis.
It remains to consider $I_2$. By {the definition of $u$ we have}
$$
u(a,t) = \vepsi(a-t) + \int_0^t \dot{z}(\tau) d\tau \,,\qquad a\ge t \,,
$$
and thus
$$
| u(a,t) | \leq |\vepsi(a-t)| + \sqrt{t} \nrm{\dot{z}}{L^2_t},
$$
which finally provides :
$$
\left| \frac{u(a,t)}{(1+a)} \right| \leq \nrm{\frac{\vepsi}{(1+a)}}{L^\infty_a} + \nrm{\dot{z}}{L^2_t}.
$$
By  Lebesgue's Theorem, this gives that $I_2$ tends to zero as $t$ goes to infinity.
These arguments show that $\dot{z}$ vanishes at infinity since $\dot{z}(t) = - \int_\rr \kernel(a,t) \psi' (u(a,t) )da$.
\end{proof}

{Finally we are able to identify the limit of $z(t)$ under the further assumptions that $\kernel$ is time independent and nonincreasing, and the problem is linear.}
We assume that $\psi(u)=u^2/2$ and {and $\da \kernel(a) \leq 0$, and set $p(a,t) := \int_0^t u(a,\tau ) d\tau= \int_0^t (z(\tau)-z(\tau-a))d\tau$, which solves}
\begin{equation}\label{eq.p}
\left\{
\begin{aligned}
& (\dt + \da ) p = - \int_\rr \kernel(a) p(a,t) da + u_I(a) \,,\quad\text{ a.e. } (a,t) \in (\rr)^2 \\
& p(0,t) =0, \quad
 p(a,0) = 0.
\end{aligned}
\right.
\end{equation}
If $p$ reaches a steady state $p_\infty$, it should satisfy
{$$
    \da  p_\infty(a) = - \int_\rr \kernel(\tia) p_\infty(\tia) d\tia + u_I(a) \,,\qquad p_\infty(0) =0 \,,
$$
with the explicit solution
$$
  p_\infty(a) = \int_0^a \vepsi(\tia)d\tia - a \int_0^\infty \kernel(\tia) \int_0^{\tia} \vepsi(\hat a)d\hat a\, d\tia \left( 1 + \int_0^\infty \kernel(\tia)\tia\,d\tia\right)^{-1} \,.
$$}
\newcommand{\hp}{{\hat{p}}}

Then, setting $\hp(a,t) := p(a,t)-p_\infty(a)$, it solves the homogeneous problem associated with \eqref{eq.p}, 
with the initial condition $\hp(a,0)=-p_\infty(a)$. 
{Multiplication by $\rho\hp$ and integration with respect to $a$ and $t$ gives
$$
  \int_0^\infty \rho \hp^2 da - 2\int_0^t \int_0^\infty \hp^2 \da\rho \,da\,ds + 2\int_0^t \left(\int_0^\infty \rho\hp \,da\right)^2 ds = \int_0^\infty \rho p_\infty^2 da \,.
$$
We use the monotonicity of $\rho$ for the second term and the Cauchy-Schwarz inequality for the first to obtain
$$
   \left(\int_0^\infty \rho\hp \,da\right)^2 + 2 \int_0^\infty \rho\,da \int_0^t \left(\int_0^\infty \rho\hp \,da\right)^2 ds \le \int_0^\infty \rho\,da \int_0^\infty \rho p_\infty^2 da \,,
$$
which implies $\int_0^\infty \rho(a)\hp(a,t)da \to 0$ as $t\to\infty$ using the same arguments as for $\dot{z}$ and $\int_\rr \rho (a) u(a,t) da$ in Proposition \ref{prop.dec.caract}. The simple computation
$$
        z(t) - z_p(0)  + \int_\rr \kernel(a) p_\infty(a) da = -\int_\rr  \kernel(a)\hp(a,t) da 
$$
completes the proof of the following result.}

\begin{propm}
	{
	Let the assumptions of Proposition \ref{prop.dec.caract} hold, let $\kernel$ be independent of $t$ and nonincreasing, and let $\psi(u) = u^2/2$. 
	Then the solution of \eqref{eq.z1.nl} satisfies
	$$
	   \lim_{t\to\infty} z(t) = z_p(0) - \int_0^\infty \rho(a) p_\infty(a)da = \left( z_p(0) + \int_0^\infty \kernel(a) \int_{-a}^0 z_p(\tau)d\tau\right)
	   \left( 1 + \int_0^\infty \kernel(\tia)\tia\,d\tia\right)^{-1} \,.
	$$}
\end{propm}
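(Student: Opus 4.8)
The plan is to convert the question about $\lim_{t\to\infty}z(t)$ into a relaxation statement for the time-integrated quantity $p(a,t)=\int_0^t\bigl(z(\tau)-z(\tau-a)\bigr)d\tau$ introduced before the statement. Since $v\equiv0$ and $\psi'(u)=u$, equation \eqref{eq.z1.nl} is simply $\dot z(t)=-\int_0^\infty\rho(a)\,u(a,t)\,da$ with $u(a,t)=z(t)-z(t-a)$; integrating in time and exchanging the order of integration yields the identity
$$
z(t)-z_p(0)=\int_0^t\dot z(\tau)\,d\tau=-\int_0^\infty\rho(a)\,p(a,t)\,da\,,
$$
so that it suffices to compute the large-time limit of $\int_0^\infty\rho(a)\,p(a,t)\,da$.

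I would then obtain the transport problem \eqref{eq.p} for $p$ by integrating the characteristic identity $(\partial_t+\partial_a)u=\dot z(t)$ over $(0,t)$ and using the displayed identity to rewrite $z(t)-z_p(0)$ as $-\int_0^\infty\rho\,p\,da$. The natural candidate for the limiting profile is the stationary solution $p_\infty$: setting $\partial_t=0$ gives $\partial_a p_\infty(a)=u_I(a)-C$ with the constant $C:=\int_0^\infty\rho\,p_\infty\,da$, whence $p_\infty(a)=\int_0^a u_I(\tilde a)\,d\tilde a-Ca$, and the self-consistency relation closes to determine $C$ explicitly; solvability is guaranteed because $\rho\in L^1((1+a)da)$ and $z_p$ is bounded, so the denominator $1+\int_0^\infty\rho(a)a\,da$ is finite and positive. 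This reproduces the stated closed form for $p_\infty$.

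The heart of the proof is to show the weak relaxation $G(t):=\int_0^\infty\rho(a)\,\hp(a,t)\,da\to0$, where $\hp:=p-p_\infty$ solves the homogeneous analogue of \eqref{eq.p} with datum $\hp(a,0)=-p_\infty(a)$ and $\hp(0,t)=0$. Testing this equation against $\rho\hp$, integrating in $a$ and integrating by parts (the boundary terms vanish at $a=0$ by $\hp(0,t)=0$ and at $a=\infty$ by decay), then integrating in time, yields the energy identity displayed before the statement. Here the monotonicity hypothesis $\partial_a\rho\le0$ makes the $\partial_a\rho$-term of favorable sign, and the Cauchy--Schwarz bound $G(t)^2\le\int_0^\infty\rho\,da\int_0^\infty\rho\hp^2\,da$ turns the identity into a closed differential inequality of Gr\"onwall type for $G$, from which $G\in L^2(\rr)$ follows.

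Finally I would upgrade $G\in L^2(\rr)$ to the genuine pointwise limit $G(t)\to0$ by the same mechanism used for $\dot z$ in Proposition \ref{prop.dec.caract}, combining square-integrability in time with enough uniform continuity of $G$ (obtained by differentiating under the integral and invoking $\partial_a\rho\in L^1$). Passing to the limit in the identity of the first paragraph then gives $\lim_{t\to\infty}z(t)=z_p(0)-C$; substituting $u_I(a)=z_p(0)-z_p(-a)$, computing $\int_0^a u_I(\tilde a)\,d\tilde a=a\,z_p(0)-\int_{-a}^0 z_p(\tau)\,d\tau$, and simplifying the quotient produces the asserted closed-form expression. I expect the main obstacle to be precisely this last relaxation step: the energy estimate only controls $G$ in $L^2(\rr)$, and promoting this to convergence (rather than mere convergence along a subsequence) is where the full strength of the structural hypotheses on $\rho$ --- time-independence, monotonicity, boundedness of $\rho(0,\cdot)$, and $\partial_a\rho\in L^1$ --- is genuinely used.
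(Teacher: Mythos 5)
Your proposal is correct and takes essentially the same route as the paper: the same quantity $p$ and transport problem \eqref{eq.p}, the same explicit stationary profile $p_\infty$, the same energy identity obtained by testing the homogeneous equation for $\hat p = p - p_\infty$ against $\rho\hat p$ (with the sign of $\partial_a\rho$ and Cauchy--Schwarz yielding $\int_0^\infty \rho\hat p\,da \in L^2(\RR_+)$), and the same closing identity $z(t)-z_p(0)=-\int_0^\infty\rho(a)\,p(a,t)\,da$. On the final relaxation step the paper is exactly as terse as you are, appealing to ``the same arguments as in Proposition \ref{prop.dec.caract}'' (i.e.\ the characteristic representation of $\hat p$ in terms of $G$), so your uniform-continuity variant is a comparable, equally admissible way to finish.
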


For instance if 
 $\kernel(a):=\beta \exp(- \zeta a)$, where $\zeta$ and $\beta$ are constants, 
{$$
   \lim_{t\to\infty} z(t) = \frac{\zeta^2 z_p(0) + \beta\zeta \int_{-\infty}^0  \exp(\zeta\tau)\zp(\tau) d\tau }{\zeta^2 + \beta} \,.
$$
}

\section{Discontinuous stretching force -- differential inclusions} \label{sec.diff.inclusion}
\renewcommand{\e}{\varepsilon}

{In this section we allow the elastic response function $\psi'$ to be discontinuous. However, different from the preceding
section, we assume its boundedness. Note that in terms of the potential $\psi$ this means that the convexity assumption, which implies local Lipschitz continuity,
is strengthened to global Lipschitz continuity.
We start by smoothing $\psi$, to be able to apply results from the preceding section.}
\newcommand{\psid}{\psi_\delta}
\begin{lemm}\label{lem.reg}
{Let Assumptions \ref{hypo.data} hold and furthermore $\psi\in C^{0,1}(\R)$ with Lipschitz constant $L$. 
	Let $\omega_1$ denote a smooth, even probability density and $\omega_\delta := \delta^{-1}\omega_1(\cdot/\delta)$. 
	Then, for $\delta>0$, $\psi_\delta := \omega_\delta\star\psi - \omega_\delta\star\psi(0)$ is smooth, even, convex, and Lipschitz continuous with Lipschitz constant $L$.
	Furthermore $\psi_\delta'$ is Lipschitz continuous on $\R$ and $\lim_{\delta\to 0+}\psi_\delta = \psi$, uniformly on bounded subsets of $\RR$.}
	\end{lemm}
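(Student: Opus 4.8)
The plan is to transfer each property of $\psi$ through the convolution with the nonnegative, even, smooth kernel $\omega_\delta$. Write $\Psi_\delta:=\omega_\delta\star\psi$, so that $\psi_\delta=\Psi_\delta-\Psi_\delta(0)$; the subtracted constant only serves to enforce $\psi_\delta(0)=0$ and affects none of the qualitative properties below. Since $\psi$ is Lipschitz with $\psi(0)=0$ we have $|\psi(u)|\le L|u|$, so the convolution $\Psi_\delta(x)=\int_\R\omega_\delta(x-y)\psi(y)\,dy$ is well defined, and, differentiating under the integral sign (legitimate because $\omega_\delta$ and all its derivatives are integrable against the at most linear growth of $\psi$), $\Psi_\delta\in C^\infty(\R)$. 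Evenness is inherited because the convolution of the even functions $\omega_\delta$ and $\psi$ is even. For convexity I would fix $\lambda\in[0,1]$ and $x,x'\in\R$, write $\lambda x+(1-\lambda)x'-y=\lambda(x-y)+(1-\lambda)(x'-y)$, apply the convexity of $\psi$ inside the integral, and integrate against $\omega_\delta\ge0$; this yields $\Psi_\delta(\lambda x+(1-\lambda)x')\le\lambda\Psi_\delta(x)+(1-\lambda)\Psi_\delta(x')$, and subtracting a constant preserves convexity.

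For the Lipschitz bound with the same constant $L$ I would estimate $|\Psi_\delta(x)-\Psi_\delta(x')|\le\int_\R\omega_\delta(y)\,|\psi(x-y)-\psi(x'-y)|\,dy\le L|x-x'|$, using $\int_\R\omega_\delta=1$. The one point that requires a little care is the Lipschitz continuity of $\psi_\delta'$, i.e. the boundedness of $\psi_\delta''$: the naive identity $\psi_\delta''=\omega_\delta''\star\psi$ involves the unbounded $\psi$, so instead I would move one derivative onto $\psi$. Since $\psi$ is Lipschitz it is absolutely continuous with $\psi'\in L^\infty(\R)$ and $\|\psi'\|_{L^\infty}\le L$, and an integration by parts gives $\psi_\delta'=\omega_\delta\star\psi'$, hence $\psi_\delta''=\omega_\delta'\star\psi'$. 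Therefore $\|\psi_\delta''\|_{L^\infty}\le L\,\|\omega_\delta'\|_{L^1}=L\,\delta^{-1}\|\omega_1'\|_{L^1}<\infty$, which is finite for each fixed $\delta>0$ and proves that $\psi_\delta'$ is Lipschitz.

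Finally, for the uniform convergence $\psi_\delta\to\psi$ I would use the standard mollifier estimate together with the Lipschitz property of $\psi$: for every $x$,
\[
 |\Psi_\delta(x)-\psi(x)|\le\int_\R\omega_\delta(y)\,|\psi(x-y)-\psi(x)|\,dy\le L\int_\R\omega_\delta(y)\,|y|\,dy=L\,\delta\int_\R\omega_1(z)\,|z|\,dz,
\]
which tends to $0$ as $\delta\to0$, uniformly in $x\in\R$ (so a fortiori on bounded subsets). In particular $\Psi_\delta(0)\to\psi(0)=0$, so $\psi_\delta=\Psi_\delta-\Psi_\delta(0)\to\psi$ uniformly. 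The only genuine obstacle is the boundedness of $\psi_\delta''$ discussed above, which forces the derivative to be placed on the bounded factor $\psi'$ rather than on $\psi$; all remaining steps are routine properties of convolution with a nonnegative even mollifier.
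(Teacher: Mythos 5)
Your proof is correct and takes essentially the same approach as the paper's: the identical convexity-under-the-integral argument, and the identical key estimate $\|\psi_\delta''\|_{L^\infty}\le L\,\delta^{-1}\|\omega_1'\|_{L^1}$ obtained by putting one derivative on the bounded $\psi'$ and one on the mollifier (you justify this identity via integration by parts, the paper writes it directly). The remaining points (evenness, Lipschitz constant $L$, uniform convergence), which you prove explicitly, are simply cited by the paper as standard mollifier facts from Evans' textbook.
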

\begin{proof}
	Since $\psi$ is convex we have
	$$
	\psi(\theta u + (1-\theta) v- y )= \psi(\theta (u - y) + (1-\theta)(v-y)) \leq \theta \psi(u-y) + (1-\theta) \psi(v-y) \,.
	$$
	Integrating against $\omega_\delta(y)dy$ gives the convexity of $\psi_\delta$. {The estimate
	$$
	   |\psi_\delta''(u)| = \left| \int_{\R} \psi'(u-v)\omega_\delta'(v)dv\right| \le \frac{L}{\delta} \int_{\R} |\omega_1'(\eta)|d\eta
	$$
	shows the Lipschitz continuity of $\psi_\delta'$.} The remaining results are
	standard and can be found in basic textbooks (cf. Appendix C Theorem 6 in \cite[Appendix C, Theorem 6]{Evans.Book}).
\end{proof}
\renewcommand{\vepsd}{u_\e^\delta}
\renewcommand{\zepsd}{z_\e^\delta}
\newcommand{\dzepsd}{\dot{z}^\delta_\e}

{\begin{lemm}\label{lem:reg.exist}
Let the assumptions of Lemma \ref{lem.reg} hold. Then problem \eqref{eq.z.nl} with $\psi$ replaced by $\psi_\delta$ has a unique solution
$\zepsd\in C^1(I_T)$, which is, for every compact $\tilde I\subset I_T$, bounded in $C^1(\tilde I)$ uniformly in $\delta$ and $\e$.
\end{lemm}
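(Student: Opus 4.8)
The plan is to deduce everything from Section \ref{sec.reg}, since by Lemma \ref{lem.reg} the mollified potential $\psid$ satisfies precisely the hypotheses under which those results were established. First I would check that $\psid$ is admissible for Theorem \ref{thm.exist.uniq}: it is even and convex with $\psid(0)=0$, and it is nonnegative because an even convex function attains its minimum at the origin, so $\psid\ge\psid(0)=0$; moreover $\psid'$ is Lipschitz on $\rr$ by Lemma \ref{lem.reg}. Hence Assumptions \ref{hypo.data}(i) hold for $\psid$ (the data $z_p$, $v$, $\kernel$ being unchanged), and the extra requirement of Theorem \ref{thm.exist.uniq} that $\psi'$ be Lipschitz is met by $\psid'$. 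Applying that theorem with $\psi$ replaced by $\psid$ then yields, for each fixed $\delta>0$ and $\e>0$, a unique solution $\zepsd\in C^1(I_T)$. Here I would remark that the Lipschitz constant of $\psid'$ behaves like $1/\delta$ (cf. the estimate in the proof of Lemma \ref{lem.reg}), so the Picard contraction time shrinks as $\delta\to 0$; this is harmless, because the global a priori bound from Lemma \ref{lemm:comp} allows one to continue the local solution up to the end of $I_T$ no matter how small that contraction time is.

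Next I would establish the uniform $C^0$ bound. Applying Lemma \ref{lemm:comp} to $\zepsd$ with $\phi(a,t,u)=\kernel(a,t)\psid'(u)$ and $f=v$ — the map $u\mapsto\psid'(u)$ is odd since $\psid$ is even and nondecreasing since $\psid$ is convex, so $\cH$ has the structure required by the lemma — gives
$$
   |\zepsd(t)| \le \sup_{(-\infty,0)}|z_p| + \int_0^t |v(s)|\,ds \,,\qquad 0\le t\le T \,.
$$
The right-hand side contains neither $\delta$ nor $\e$, so it already bounds $\sup_{\tilde I}|\zepsd|$ uniformly in both parameters on every compact $\tilde I\subset I_T$.

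The $C^1$ bound is the only place where a possible obstruction appears, and it is the crux of the uniformity claim. One might fear that the estimate degenerates as $\delta\to 0$ because $\psid'$ has Lipschitz constant of order $1/\delta$; the resolution is that bounding $\dzepsd$ needs only the \emph{pointwise} bound on $\psid'$, not its Lipschitz constant. Indeed, since $\psid$ is Lipschitz with constant $L$ uniformly in $\delta$ by Lemma \ref{lem.reg}, we have $|\psid'|\le L$, and reading off $\dzepsd$ from the equation gives
$$
   |\dzepsd(t)| \le |v(t)| + L\int_\rr \kernel(a,t)\,da \,.
$$
Because $v\in C^1(I_T)$ and $\kernel\in C_B(I_T;L^1((1+a^2)da))$, the quantity $\sup_{t}\int_\rr \kernel(a,t)\,da$ is finite, so the right-hand side is bounded on every compact $\tilde I\subset I_T$ uniformly in $\delta$ and $\e$. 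Combining this with the $C^0$ bound yields the asserted uniform $C^1(\tilde I)$ estimate, which completes the plan.
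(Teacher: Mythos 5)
Your proof is correct and follows essentially the same route as the paper: apply Theorem \ref{thm.exist.uniq} with $\psi$ replaced by $\psi_\delta$, then note that the velocity bound $|\dot z_\e^\delta(t)| \le |v(t)| + L\int_\rr \kernel(a,t)\,da$ uses only the $\delta$-uniform pointwise bound $|\psi_\delta'|\le L$ coming from the uniform Lipschitz constant of $\psi_\delta$, not the (degenerating) Lipschitz constant of $\psi_\delta'$. The only immaterial difference is that the paper obtains the uniform $C^0$ bound by integrating the velocity bound, $|z_\e^\delta(t)|\le |z_p(0)| + t\,\|\dot z_\e^\delta\|_{L^\infty(I_T)}$, whereas you invoke Lemma \ref{lemm:comp} a second time; both give bounds uniform in $\delta$ and $\e$ on compact subsets of $I_T$.
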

\begin{proof}
The data with $\psi$ replaced by $\psi_\delta$ satisfy the assumptions of Theorem \ref{thm.exist.uniq}, implying the existence and uniqueness statement.
The obvious estimates
$$
   |\dzepsd(t)| \le \|v\|_{L^\infty(I_T)} + L\left\|\int_0^\infty \kernel(a,\cdot)da \right\|_{L^\infty(I_T)} \,,\qquad
   |\zepsd(t)| \le |z_p(0)| + t \|\dzepsd\|_{L^\infty(I_T)} \,,
$$
complete the proof.
\end{proof}
We shall deal with the lack of smoothness of the potential by passing to a variational formulation analogous to the treatment of gradient flows with nonsmooth
convex potentials (see, e.g., \cite{Evans.Book}). For $t\in I_T$, $z:\, (-\infty,t)\to \R$, and $w\in\R$, we define
$$
  \mathcal{I}_\delta [z,t](w) := \e \int_\rr  \psid\left(\frac{w-z(t-\e a)}{\e}\right) \kernel(a,t) da \,,
$$
which is (for each $\delta>0$) a smooth function of $w$. With the notation from Lemma \ref{lem:reg.exist} we have by the convexity and smoothness 
of $\psi_\delta$ that for each $t\in I_T$
$$
    \zepsd(t) = \argmin_{w\in\R} \left( \mathcal{I}_\delta [\zepsd,t](w) + w(\dzepsd(t) - v(t))\right) \,,
$$
or, equivalently,
\begin{equation}\label{var-inequal-d}
	\mathcal{I}_\delta [\zepsd,t](w) \geq \mathcal{I}_\delta [\zepsd,t](\zepsd(t)) +(v(t)-\dzepsd(t)) (w-\zepsd(t)) \,,\qquad \forall\,w\in\R \,.
\end{equation}
The formal limit 
$$
  \mathcal{I}[z,t](w) := \e \int_\rr  \psi\left(\frac{w-z(t-\e a)}{\e}\right) \kernel(a,t) da \,,
$$
of $\mathcal{I}_\delta$ is still a convex, but not necessarily a smooth function of $w$. 
\newcommand{\subI}{\partial \mathcal{I}}
We define its set valued subdifferential by
$$
\subI [z,t](w) := \left\{ q \in \RR:\, \mathcal{I}[z,t](\hat w) \geq \mathcal{I}[z,t](w) + q (\hat w-w),\quad \forall \hat w \in \RR \right\} \,.
$$
For each $w\in\R$ it is a nonempty closed interval. An existence result, where \eqref{eq.z.nl} is replaced by a differential inclusion can now be 
proven by passing to the limit $\delta\to 0$ in \eqref{var-inequal-d}.
\begin{theorem}\label{thm.z.eps.exist.inclusion}
Let the assumptions of Lemma \ref{lem.reg} hold. Then there exists 
	$\zeps \in C^{0,1}_{loc}(I_T)$ such that, 
for almost every $t\in I_T$, 
 	\begin{equation}\label{eq.z.eps.diff.incl1}
		v(t) -\dzeps(t) \in \partial \mathcal{I}[\zeps,t](\zeps(t)) \,.
	\end{equation}
\end{theorem}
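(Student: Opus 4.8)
The plan is to obtain $\zeps$ as a limit of the regularized solutions $\zepsd$ of Lemma \ref{lem:reg.exist} as $\delta\to 0$, using the variational inequality \eqref{var-inequal-d} as the structure that survives the limit. By Lemma \ref{lem:reg.exist} the family $\zepsd$ is bounded in $C^1(\tilde I)$ uniformly in $\delta$ (and $\e$) on every compact $\tilde I\subset I_T$, hence uniformly Lipschitz there. By Arzel\`a--Ascoli together with a diagonal extraction over an exhaustion of $I_T$ by compact intervals, I would extract a sequence $\delta\to 0$ along which $\zepsd\to\zeps$ uniformly on compact subsets, with $\zeps\in C^{0,1}_{loc}(I_T)$. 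The uniform $L^\infty$-bound on $\dzepsd$ then yields $\dzepsd\wcvg\dzeps$ weakly in $L^2(\tilde I)$, the weak limit coinciding with the a.e.\ derivative of the Lipschitz function $\zeps$.

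The essential device is to localize \eqref{var-inequal-d} in time before passing to the limit. Fixing $w\in\R$ and a nonnegative $\theta\in C_c(I_T)$, I multiply \eqref{var-inequal-d} by $\theta(t)$ and integrate in $t$:
\begin{equation*}
\int_{I_T}\theta\bigl(\mathcal{I}_\delta[\zepsd,t](w)-\mathcal{I}_\delta[\zepsd,t](\zepsd(t))\bigr)dt \ge \int_{I_T}\theta\,(v-\dzepsd)(w-\zepsd)\,dt \,.
\end{equation*}
On $\supp\theta$ the arguments $(w-\zepsd(t-\e a))/\e$ lie in a fixed bounded set uniformly in $\delta,a,t$: here $\e$ is fixed, $\zepsd$ is uniformly bounded for $t-\e a\ge 0$ and equals the bounded datum $z_p$ for $t-\e a<0$. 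Combining this with the uniform convergence $\psid\to\psi$ on bounded sets (Lemma \ref{lem.reg}), the bound $|\psid(s)|\le L|s|$, the uniform convergence $\zepsd\to\zeps$, and $\rho\in C_B(I_T;L^1)$, the dominated convergence theorem shows that the left-hand side converges to the analogous expression with $\mathcal{I}$ and $\zeps$. For the right-hand side, $\theta(w-\zepsd)\to\theta(w-\zeps)$ strongly in $L^2$ while $\dzepsd\wcvg\dzeps$ weakly in $L^2$, so the strong--weak product passes to the limit, giving
\begin{equation*}
\int_{I_T}\theta\bigl(\mathcal{I}[\zeps,t](w)-\mathcal{I}[\zeps,t](\zeps(t))\bigr)dt \ge \int_{I_T}\theta\,(v-\dzeps)(w-\zeps)\,dt \,.
\end{equation*}

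Since $\theta\ge 0$ is arbitrary, the integrands satisfy the inequality for a.e.\ $t$, for each fixed $w$. Taking a countable dense set of values $w$ and using that both sides are continuous in $w$ (the map $w\mapsto\mathcal{I}[\zeps,t](w)$ being finite and convex, hence continuous, and the right-hand side affine), I obtain a single null set off which
\begin{equation*}
\mathcal{I}[\zeps,t](w)\ge\mathcal{I}[\zeps,t](\zeps(t))+(v(t)-\dzeps(t))(w-\zeps(t))\,,\qquad\forall\,w\in\R\,,
\end{equation*}
which is exactly \eqref{eq.z.eps.diff.incl1}. The main obstacle is that the derivatives $\dzepsd$ converge only weakly, so the inclusion cannot be passed to the limit pointwise in $t$; this is precisely what the integration against $\theta$ and the strong--weak product resolve, at the (expected) cost of obtaining \eqref{eq.z.eps.diff.incl1} only for almost every $t$.
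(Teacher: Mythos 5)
Your proposal is correct and follows essentially the same route as the paper: regularize $\psi$ via Lemma \ref{lem.reg}, use the uniform $C^1$ bounds of Lemma \ref{lem:reg.exist} with Arzel\`a--Ascoli to extract a locally uniform limit $\zeps$ with weakly convergent derivatives, and pass to the limit in the variational inequality \eqref{var-inequal-d}, handling the integral terms by dominated convergence and the derivative term by a strong--weak product argument. The only difference is one of explicitness: where the paper tersely asserts weak-star convergence of the last term and validity of the limiting inequality at Lebesgue points of $\dzeps$, you carry out the localization against nonnegative test functions and recover the inequality for all $w$ simultaneously via a countable dense set and continuity in $w$, which is a more detailed rendering of the same step rather than a different method.
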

\begin{proof}
By Lemma \ref{lem:reg.exist} and the Arzela-Ascoli theorem, there exists $\zeps \in C^{0,1}_{loc}(I_T)$, such that, as $\delta\to 0$, $\zepsd$ converges 
(up to the choice of an appropriate subsequence) to $\zeps$ uniformly on bounded subintervals of $I_T$. Also $\dzepsd$ converges to $\dzeps$ in 
$L^\infty(I_T)$ weak star, where the notation is justified, since it is equal to the derivative of $\zeps$ almost everywhere in $I_T$. By Lemma 
\ref{lem.reg}, ii) and iii), the integrands
in $\mathcal{I}_\delta [\zepsd,t](w)$ and $\mathcal{I}_\delta [\zepsd,t](\zepsd(t))$ converge pointwise in $a\in\rr$. By the uniform Lipschitz continuities of
$\psid$ and $\zepsd$ the integrands can be bound by $C(1+a)\rho\in L^1(\rr)$. Therefore we can pass to the limit in $\mathcal{I}_\delta [\zepsd,t](w)$ and 
$\mathcal{I}_\delta [\zepsd,t](\zepsd(t))$ by dominated convergence.\\
The last term in \eqref{var-inequal-d} converges in $L^\infty(I_T)$ weak star, as a consequence of the strong convergence of $\zepsd$ and of the weak star
convergence of $\dzepsd$. Therefore the limiting variational inequality
$$
	\mathcal{I} [\zeps,t](w) \geq \mathcal{I}[\zeps,t](\zeps(t)) +(v(t)-\dzeps(t)) (w-\zeps(t)) \,,\qquad \forall\,w\in\R \,,
$$
holds for all Lebesgue points $t$ of $\dzeps$, and this is equivalent to \eqref{eq.z.eps.diff.incl1}.
\end{proof}

The formal limit problem \eqref{eq.zz.nl.lip} is equivalent to
$$
   0 \in \partial \mathcal{J}_t(\dot z_0(t)) \qquad\mbox{with } \mathcal{J}_t(w) = \frac{w^2}{2} - v(t)w + \int_0^\infty \frac{\psi(aw)}{a}\rho(a,t)da \,,
$$
which means that we are looking for a minimizer of $\mathcal{J}_t$. Since this a strictly convex, coercive function, a unique minimizer exists,
showing the existence of a unique solution of \eqref{eq.zz.nl.lip}.

In the following proof we shall need a result on the representation of subdifferentials \cite{clarke.book}.
	With the definition
	$$
	\veps(a,t) := 
	\begin{cases}
	\frac{\zeps(t)-\zeps(t-\e a)}{\e}& \text{ if } t >\e a \\
	\frac{\zeps(t)-\zp(t-\e a)}{\e} & \text{ otherwise} 
	\end{cases},\quad  \vz(a,t) := a \dzz(t) , \quad \text{ for a.e. } (a,t) \in \rr \times (0,T).
	$$
	we define the function
	$$
	f(w) := \int_\rr \psi (\veps(a,t) +w) \kernel(a,t) da \,.
	$$
	As a consequence of $\psi$ being convex and Lipschitz, the subdifferentials of $\psi$ and $f$ coincide with their
	generalized gradients, as defined in \cite[Prop. 2.2.7]{clarke.book}. This allows to use \cite[Theorem 2.7.2]{clarke.book} implying
	$$
	\partial f(w) \subset \int_\rr \partial\psi(w+\veps(a,t)) \kernel(a,t) da \,.
	$$
	As a consequence there exist measurable selections  $\zeta_{\veps}(a,t) \in \partial\psi(\veps(a,t))$ and $\zeta_{\vz}(a,t) \in \partial \psi (\vz(a,t))$  
	such that
	$$
	\dzeps(t) +  \int_\rr \zeta_{\veps} (a,t)\kernel(a,t) da = v(t) \,,\qquad \dzz(t) +  \int_\rr \zeta_{\vz} (a,t)\kernel(a,t) da = v(t) \,.
	$$

\renewcommand{\ovu}{\overline{u}}
\begin{theorem}\label{thm.cvg.pcwz.cuu}
		Assume that $\zeps$ solves the differential inclusion \eqref{eq.diff.inclusion.z.intro}, with 
		\begin{itemize}
			\item $\kernel$ is constant in time, and $\kernel \in L^1 (\rr,(1+a)^2)\cap L^\infty(\rr)$
			\item $v$ is constant,
			\item $\psi$ is convex, $L_\psi$-Lipschitz and there 
			exists a finite set $U:= \{\ovu_{i}, \; i\in \{1,\dots,N\}\}$ such that
			$u_1 < u_2 < \dots < u_N$, 
			$\psi \in C^{1,1}(\rr \setminus U )$	
			and there exists $L_{\psi'}$ such that 
			{$$
			| \psi'(w_1) -\psi'(w_2) | \leq L_{\psi'} | w_2 - w_1 |.
			$$}	
			for all {$(w_1,w_2)$} $ \in (-\infty,u_1)^2 \cup\bigcup_{i=1}^{N-1} (u_i,u_{i+1})^2 \cup (u_N,\infty)^2$.
		\end{itemize}
		then there exists a unique real $\gamma \in \RR$ solving \eqref{eq.zz.nl.lip}. Moreover if $\gamma\neq0$, then 
		{\begin{equation}
		\label{eq.err.est}
		\nrm{\zeps- \zz}{C([0,T])} \leq C(v,\rho,L_{\psi'},\zp) \e \left| \ln \e \right|
		\end{equation}
		where $\zz(t) = \gamma t + \zp(0)$.}	
\end{theorem}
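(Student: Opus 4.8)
The plan is to combine the comparison principle of Lemma \ref{lemm:comp} with a careful splitting that isolates the jumps of $\psi'$. Uniqueness of $\gamma$ is immediate: the map $w\mapsto \tfrac{w^2}{2}-vw+\int_\rr \tfrac{\psi(aw)}{a}\kernel(a)\,da$ is strictly convex and coercive (by convexity of $\psi$ and $\psi(0)=0$), hence has a single minimizer, which is exactly the $\gamma$ characterized by \eqref{eq.zz.nl.lip}; I set $\zz(t)=\gamma t+\zp(0)$ and $\hat z:=\zeps-\zz$. Using the measurable selections $\zeta_{\veps}(a,t)\in\partial\psi(\veps(a,t))$ and $\zeta_{\vz}(a,t)\in\partial\psi(\vz(a,t))$ constructed just before the statement (with $\vz(a,t)=a\gamma$), subtracting $\dzeps+\int\zeta_{\veps}\kernel\,da=v$ from $\gamma+\int\zeta_{\vz}\kernel\,da=v$ yields the error identity $\dot{\hat z}(t)+\int_0^\infty(\zeta_{\veps}-\zeta_{\vz})\kernel\,da=0$, with $\hat z\equiv 0$ on $(-\infty,0]$, and $\veps-\vz=\tfrac{\hat z(t)-\hat z(t-\e a)}{\e}$ whenever $t-\e a>0$.

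Next I would partition, at each time $t$, the variable $a$ into three pieces: the memory region $\{a>t/\e\}$ (where $\veps$ involves the past data $\zp$ while $\vz=a\gamma$ is a linear extrapolation); the good set $G$ on which the segment $[\vz,\veps]$ avoids the jump set $U$; and the bad set $B$ on which it crosses some $u_i$. On $G$, the $C^{1,1}$-regularity of $\psi$ between jumps gives $\zeta_{\veps}-\zeta_{\vz}=k_\e(a,t)\tfrac{\hat z(t)-\hat z(t-\e a)}{\e}$ with $0\le k_\e\le L_{\psi'}$. This term defines a linear operator $\cL_\e$ whose kernel $\kernel\,k_\e$ is nonnegative, so Lemma \ref{lemm:comp} applies with the odd, nondecreasing choice $\phi(a,t,u)=\kernel(a)k_\e(a,t)u$. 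Collecting the memory and bad contributions into a residual $\mathcal{R}_\e$, the equation reads $\cL_\e[\hat z]=\mathcal{R}_\e$, and the stability estimate of Lemma \ref{lemm:comp} (with $z_p=0$) gives $\|\hat z\|_{C([0,t])}\le\int_0^t|\mathcal{R}_\e(\tau)|\,d\tau$.

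I would then estimate the residual in two parts. For the memory part, $\int_{t/\e}^\infty|\zeta_{\veps}-\zeta_{\vz}|\kernel\,da\le 2L_\psi\int_{t/\e}^\infty\kernel\,da$, and integrating in $t$ after splitting at $t=\e$ and using $\int_{t/\e}^\infty\kernel\,da\le\min(\|\kernel\|_{L^1},\tfrac{\e}{t}\|\kernel\|_{L^1(a\,da)})$ produces the bound $O(\e|\ln\e|)$; this is precisely the origin of the logarithm in \eqref{eq.err.est}, and it uses the integrability $\kernel\in L^1(\rr,(1+a)^2)$. For the bad part this is where $\gamma\neq0$ is indispensable: since $\vz=a\gamma$, the segment $[\vz,\veps]$ can straddle $u_i$ only for $a$ near $a_i:=u_i/\gamma$, where the bad set has $a$-width at most $\tfrac{2|\hat z(t)-\hat z(t-\e a_i)|}{\e|\gamma|}$; exchanging the order of integration in $(a,\tau)$ bounds $\int_0^t\int_B 2L_\psi\kernel\,da\,d\tau$ by $\tfrac{C}{|\gamma|}\|\kernel\|_{L^\infty}\big(\sum_i a_i\big)\,\|\dot{\hat z}\|_{L^1(0,t)}$.

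The \emph{main obstacle} is then to close the estimate, i.e. to show $\|\dot{\hat z}\|_{L^1(0,T)}=O(\e|\ln\e|)$, since a naive bound $|\hat z(t)-\hat z(t-\e a_i)|/\e\le 2C_0 a_i$ (with $C_0$ the uniform Lipschitz bound on $\zeps$ from Lemma \ref{lem:reg.exist}) only gives an $O(1)$ bad set. The resolution is that $\gamma\neq0$ forces $\veps$ to sweep through each discontinuity transversally, so the velocity error $\dot{\hat z}=\dzeps-\gamma$ is concentrated in an initial layer of microscopic width $O(\e)$: past the layer, the memory term has relaxed $\veps$ towards $a\gamma$ on $G$ and $B$ is essentially empty. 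I would make this quantitative via the transport structure for $\veps$ used in Proposition \ref{prop.dec.caract}, obtaining decay in $t$ of $|\hat z(t)-\hat z(t-\e a)|/\e$, and then run a Gronwall/bootstrap that couples $\|\hat z\|_{C([0,t])}$ and $\|\dot{\hat z}\|_{L^1(0,t)}$, with the memory term furnishing the forcing $O(\e|\ln\e|)$ while the good and bad terms are absorbed. Controlling the time spent near each jump — so that it is $O(\e)$ rather than $O(1)$ — is the delicate point, and it is the only place where the hypothesis $\gamma\neq0$ is genuinely needed.
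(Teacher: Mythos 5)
Your overall framework (uniqueness via the strictly convex coercive functional, the error identity with measurable selections, the comparison-principle strategy, the memory/tail term as the origin of the $\e|\ln\e|$ factor) parallels the paper, but your decomposition of the $a$-variable is different from the paper's, and that difference is exactly where your argument breaks down. You split according to whether the segment $[\vz(a,t),\veps(a,t)]$ crosses a jump $u_i$ (bad set $B$) or not (good set $G$), and then you must show the measure of $B$ is small. As you yourself observe, the a priori Lipschitz bound on $\zeps$ only gives an $O(1)$ bad set, and your proposed repair — proving that $\dot{\hat z}$ concentrates in an initial layer of width $O(\e)$ so that $B$ is ``essentially empty'' afterwards, then bootstrapping — is circular: the claim $\|\dot{\hat z}\|_{L^1(0,T)}=O(\e|\ln\e|)$ is \emph{stronger} than the theorem's conclusion (which follows from it by integration, since $\hat z(0)=0$), so you are assuming a quantitative form of what you want to prove. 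No mechanism in your sketch actually closes this loop; the coupling you describe between $\|\hat z\|_{C([0,t])}$ and $\|\dot{\hat z}\|_{L^1(0,t)}$ carries a factor $1/\e$ through the difference quotients and does not yield a usable Gronwall inequality.

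The idea you are missing is the paper's threshold decomposition: classify $a$ not by whether the segment crosses the jump, but by the \emph{size} of $\veps-\vz$. Let $A_{\eta,t}:=\{a:\,|\veps(a,t)-\vz(a,t)|\le\eta\}$. On the far set $\rr\setminus A_{\eta,t}$ — no matter whether the segment straddles a jump — the difference quotient $k_\e=(\zeta_{\veps}-\zeta_{\vz})/(\veps-\vz)\,\kernel$ is nonnegative by monotonicity of $\partial\psi$ and bounded by $2L_\psi\kernel/\eta$, so it is absorbed into the linear comparison operator; its measure never needs to be estimated. On the near set $A_{\eta,t}$, transversality enters deterministically: since $\vz=a\gamma$ with $\gamma\neq 0$, both $\veps$ and $\vz$ can be on opposite sides of (or near) the jump $\ovu$ only for $a$ in the ball $B(a_0,\omega)$, $a_0=\ovu/\gamma$, $\omega=2\eta/|\gamma|$, whose contribution is $O(\eta/|\gamma|)$ by boundedness of $\kernel$ and of $\partial\psi$; outside that ball both values lie in a common $C^{1,1}$ branch and the residual is $\le L_{\psi'}\eta\|\kernel\|_{L^1}$. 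This makes the residual small \emph{without any reference to $\hat z$}, so the comparison principle (the generalized Gronwall lemma of Gripenberg--Londen--Staffans in the paper) closes the estimate directly, with the tail term giving $\frac{C}{\eta(1+t/\e)}$ and an optimization in $\eta$ at the end. In short: your good/bad dichotomy puts the straddling, far-apart points into the residual, where they cannot be controlled; the paper puts them into the monotone linear part, where they need no control at all.
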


\begin{proof}
	We prove the result for $N=1$, the general proof for $N>1$ works the same.

	{
		First, if $\gamma$ solves \eqref{eq.zz.nl.lip} with a  kernel $\kernel(a)$ and a source term $v$ both constant in time, 
		then $\gamma$ is constant.
		For the rest of the proof, we set $\vz (a,t):= a \gamma$ and we assume that $\gamma \neq 0$. 
		Then, one defines $A_{\eta,t} := \{ a \in \rr $ $\st |\veps(a,t)-\vz(a,t)|\leq \eta\}$. 
		Since, for a fixed $t$, the function of a, $\veps(a,t)-\vz(a,t)$ 
		is continuous, $A_{\eta,t}$ is a closed set. It is also Lebesgue-measurable.
		By hypothesis, there exists  $\ovu\in\RR$ such that $\psi \in C^{1,1}(\RR\setminus\{\ovu\})$
		and there exists a constant $L_{\psi'}$ such that 
		$$
		\left| \psi'(u)-\psi'(v)\right| \leq L_{\psi'} |u-v| ,\quad \forall (u,v) \in (-\infty,\ovu)^2 \cup (\ovu,+\infty)^2.
		$$		
		In this context, we consider four cases depending on whether $\gamma>0$ (resp $\gamma <0$) and
		$\ovu \geq 0$ (resp. $\ovu<0$) :
		\begin{enumerate}[i)]
			\item If $\gamma>0$ and $\ovu<0$, 
			we assume that $\eta < |\ovu|$.
			For every $a \in A_{\eta,t}$, 
			one has : 
			$$
			\vz(a,t) = \gamma a \geq 0 > \ovu
			$$ 
			and
			$$
			-\eta < \veps(a,t) - \gamma a < \eta
			$$	
			which implies :
			$$
			\ovu < \gamma a + \ovu < \gamma a- \eta < \veps(a,t).
			$$
			This means that for every $a \in A_{\eta,t}$,
			$$
			(\vz(a,t),\veps(a,t)) \in (\ovu,\infty)^2.
			$$
			Both solutions lie in the domain where $\psi'$ is Lipschitz. Thus
			$\zeta_{\veps}(a,t)=\psi'(\veps(a,t))$ and $\zeta_{\vz}(a,t)=\psi'(\vz(a,t))$,
			and thus setting 
			$$
			\cR_\eta(t):=\int_{A_{\eta,t}}\left( \zeta_{\veps}(a,t) - \zeta_{\vz}(a,t)\right) \kernel(a ) da 
			$$
			one has that $|\cR_\eta(t)| \leq \eta L_{\psi'} \nrm{\kernel}{L^1_a}$. 
			The symmetric case when $\gamma <0$ and $\ovu >0$ works the same provided again that $\eta < \ovu$.
			\item If instead, $\gamma >0$ and $\ovu \geq 0$, there exists $a_0 \geq 0$ such that $\ovu = \gamma a_0$.  
			We split the previous integral in two parts~:
			$$
			\cR_{\eta}(t) = \left(\int_{A_{\eta,t}\cap B(a_0,\omega)}+\int_{A_{\eta,t} \setminus B(a_0,\omega)} \right) \left( \zeta_{\veps}(a,t) - \zeta_{\vz}(a,t)\right)  \kernel(a)da =: I_1(t) + I_2(t)
			$$
			where $\omega$ is a small  positive parameter yet to be fixed.\\
			The first term can be bounded by the measure of $B(a_0,\omega)$, indeed :
			\begin{equation}\label{eq.I1}
			|I_1(t)| \leq 2 L_\psi \int_{B(a_0,\omega)} \kernel(a) da  \leq C \omega
			\end{equation}
			the latter bound being possible since $\kernel$ is also a bounded function. \\
			Next, if $a \in A_{\eta,t} \setminus B(a_0,\omega)$ we start by choosing
			$a \leq a_0 - \omega$. Moreover, we assume that 
			\begin{equation}\label{eq.cond.omega}
			\boxed{\omega > \frac{\eta}{\gamma}}
			\end{equation}
			These two latter inequalities allow to write :
			$$
			0 < \eta < \omega \gamma \leq \gamma (a_0-a) = \vz(a_0,t)-\vz(a,t) = \ovu - \vz(a,t)
			$$
			which implies obviously that  $\vz(a,t) < \ovu-\eta < \ovu$. Since $a\in A_{\eta,t}$, 
			$$
			\veps(a,t) < \eta + \vz(a,t) < \eta + \vz(a_0,t) - \eta = \ovu
			$$
			so that $\veps(a,t) < \ovu$ as well. 
			This implies that : for $a \in A_{\eta,t}$ and $a\leq a_0-\omega$, 
			$(\veps(a,t),\vz(a,t)) \in (-\infty,-\ovu)^2$. \\
			If $a\geq a_0+\omega$ and $a \in A_{\eta,t}$, then 
			one shows in the same way that : $(\veps(a,t),\vz(a,t)) \in (\ovu,\infty)^2$. \\
			The case when $\gamma <0$ and $\ovu \leq 0$ follows exactly the same lines and leads to the same
			conclusion : when $a\in A_{\eta,t}\setminus B(a_0,\omega)$, provided that \eqref{eq.cond.omega} holds :
			$$
			(\veps(a,t),\vz(a,t)) \in (-\infty,\ovu)^2 \cup (\ovu,\infty)^2.
			$$
			Thus $\zeta_{\veps}(a,t) = \psi'(\veps(a,t))$
			and $\zeta_{\vz}(a,t) = \psi'(\vz(a,t))$ and again
			$$
			\forall a \in A_{\eta,t} \setminus B(a_0,\omega), \; \; 	| \zeta_{\veps}(a,t) - \zeta_{\vz}(a,t) | \leq L_{\psi'}  \eta
			$$
			which shows that 
			\begin{equation}\label{eq.I2}
			\left|I_2(t)\right| \leq L_{\psi'} \eta \nrm{\kernel}{L^1_a}
			\end{equation}
			So, if for instance $\omega = 2 \eta / |\gamma|$, combining \eqref{eq.I1} and \eqref{eq.I2}, we have proved that :
			$$
			\left|\cR_\eta(t) \right| \leq \frac{C \eta }{|\gamma|}.
			$$
			One shall remark firstly that $\eta$ can be made arbitrarily small and 
			that the latter bound is uniform with respect to $\e$.
		\end{enumerate}
		Setting again $\hz(t) := \zeps(t) - \zz(t)$, we shall write the difference equation solved by $\hz$~:
		$$
		\dt \hz + \int_{\rr}\left( {\zeta_{\veps(a,t)} - \zeta_{\vz(a,t)}}\right) \kernel(a) da = 0.
		$$
		We rewrite the last integral term on the left hand side as 
		$$
		\begin{aligned}
		\int_{\rr}  \left(\zeta_{\veps(a,t)} - \zeta_{\vz(a,t)}\right) \kernel(a) da
		=  &
		\int_{\rr \setminus A_{\eta,t}} \frac{\zeta_{\veps(a,t)} - \zeta_{\vz(a,t)}}{\veps(a,t)-\vz(a,t)} 
		(\veps(a,t)-\vz(a,t))\kernel(a) da \\
		&	+  \int_{ A_{\eta,t} } {\zeta_{\veps(a,t)} - \zeta_{\vz(a,t)}} 
		\kernel(a) da
		\end{aligned}
		$$
		that becomes :
		$$
		\dt \hz + \int_{\rr \setminus A_{\eta,t}} \frac{\zeta_{\veps(a,t)} - \zeta_{\vz(a,t)}}{\veps(a,t)-\vz(a,t)} 
		(\veps(a,t)-\vz(a,t))\kernel(a) da = - \cR_\eta,
		$$
		and we denote 
		\begin{equation}
			\label{eq.def.k.eps}
			k_\e(a,t) := \frac{\zeta_{\veps(a,t)} - \zeta_{\vz(a,t)}}{\veps(a,t)-\vz(a,t)} \kernel(a) \chiu{\rr \setminus A_{\e,\eta, t}}{(a)}.
		\end{equation}
		Since the subdifferential of $\psi$ is monotone,  $k_\e$ is  positive, moreover it is a function in $L^1(\rr,(1+a)^2)$. Indeed 
		\begin{equation}\label{eq.borne.keps}
		0 \leq k_\e(a,t) \leq {2  L_\psi} \kernel(a)/{\eta}. 
		\end{equation}
		Our problem can thus be rephrased as
		\begin{equation}\label{eq.dz.psip.disc}
		\dt \hz + \int_{\rr} k_\e (a,t) \left\{ \veps(a,t) - \vz(a,t) \right\} da = -\cR_\eta,
		\end{equation}
		that becomes : 
		$$
		\dt \hz + \int_{\rr} k_\e (a,t) \left\{ \veps(a,t) - \tiuze(a,t) \right\} da = 
		-\int_\rr  k_\e(a,t) (\tiuze(a,t) - \vz(a,t) ) da 
		-\cR_\eta,
		$$
		where
		$$
		\tiuze (a,t) := 
		\begin{cases}
		\frac{\zz(t)-\zz(t-\e a)}{\e} = \gamma a & \text{if} \; t\geq \e a \\
		\frac{\zz(t)-\zp(0)}{\e}  = \frac{\gamma t}{\e} & \text{ otherwise}.
		\end{cases}
		$$
		Thanks to this latter definition the first term in the right hand side above can be reduced to
		$$
		\int_\rr  k_\e(a,t) (\tiuze(a,t) - \vz(a,t) ) da = \ue \int_\tse^\infty \left(\tse -a \right) k_\e(a,t) da
		$$
		Then we rewrite \eqref{eq.dz.psip.disc} as :
		\begin{equation}\label{eq.hat.zz.lip}
		\begin{aligned}
		\cT_\e [\hz](t) & =  
		\ue \int_{\tse}^{+\infty} k_\e(a,t) \left(a -\tse + \hz(t-\e a)\right) da  - \cR_\eta,
		\end{aligned}
		\end{equation}
		where $\cT_\e$ is defined as
		$$
		\cT_\e [\hz](t)  := \dt \hz(t) + \ue \left(\int_{\rr} k_\e(a,t) da\right)  \hz(t) - \ue \int_0^\tse k_\e(a,t) \hz(t-\e a) da .
		$$
		The first term in the right hand side of \eqref{eq.hat.zz.lip} can be estimated
		thanks to \eqref{eq.borne.keps}~:
		\begin{equation}\label{eq.tail.estimates}
		\left| \ue \int_{\tse}^{+\infty} k_\e(a,t) \hz(t-\e a) da\right| \leq \frac{4 L_\psi (1+L_{\zp}) \nrm{(1+a)^2\kernel}{L^1_a}}{\eta (1+\tse)}.
		\end{equation}
		At this step, we have proved that
		$$
		\cT_\e [\hz ](t) \leq m(t) := C \left( \frac{\eta}{|\gamma|} + \frac{1}{\eta (1+t/\e)}\right).
		$$
		An easy computation shows that
		$$
		\cT_\e \left[\left|\hz\right|\right] (t) \leq  \sgn(\hz(t)) \cT_\e [\hz ](t) \leq m(t)
		$$
		and since $\int_0^t m(s)ds$ is non-decreasing and non-negative, one has
		$$
		\cT_\e \left[ \int_0^t m(\tau) d\tau \right] \geq m(t)
		$$
		leading to the inequality :
		$$
		\cT_\e \left[\left|\hz\right|\right] (t) \leq \cT_\e \left[ \int_0^t m(\tau) d\tau \right] 
		$$
		We are in the framework of \cite[Generalized Gronwall Lemma 3.10, p. 298]{Gripenberg_ea} and we write :
		$$
		| \hz(t) | \leq \int_0^t m(\tau) d\tau 
		= C \left(\frac{ \e \ln|\e|}{\eta } + t  \frac{\eta}{|\gamma|}\right).
		$$
		Then, setting $\eta = \sqrt{\e\ln|\e|}$,  one obtains the error estimates \eqref{eq.err.est} which ends the proof.}
\end{proof}

{\begin{theorem}\label{thm.cvg.eps.nl.sing}
		Let $\zeps$ solve the differential inclusion \eqref{eq.diff.inclusion.z.intro}, with 
		\begin{compactenum}[i)]			
				\item The kernels $\kernel_\e$ and $\kernel_0$ are such that : 
				\begin{itemize}
					\item $\kernel_\e \in L^1\cap L^\infty(\rr\times(0,T))$
					\item $\kernel_0 \in L^1(\rr,(1+a)^2)\cap L^\infty(\rr)$ is constant in time.
				\end{itemize}
				with $\kernel_\e - \kernel_\infty$ tending to zero 
				in  $L^1(\rr\times(0,T))$.
				\item the source term $v_\e \in W^{1,\infty}(0,T)$ and $v_0 \in \RR$ such that 
				$v_\e \to v_0 \in \RR^*$  in $L^1(0,T)$,
			\item $\psi$ satisfies hypotheses of Theorem \ref{thm.cvg.pcwz.cuu}, 
		\end{compactenum}
		the same conclusions as in Theorem \ref{thm.cvg.pcwz.cuu} hold.
	\end{theorem}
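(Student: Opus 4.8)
The plan is to reduce the statement to Theorem~\ref{thm.cvg.pcwz.cuu} by regarding the $\e$- and time-dependence of $(\kernel_\e,v_\e)$ as integrable perturbations of the constant limit $(\kernel_0,v_0)$. First I would dispatch existence and uniqueness of $\gamma$: since $\kernel_0$ and $v_0$ are constant in time, the limiting inequality \eqref{eq.zz.nl.lip.cst} with these data is exactly the one treated in Theorem~\ref{thm.cvg.pcwz.cuu}, so $\gamma$ is the unique minimizer of the strictly convex, coercive functional $\mathcal{J}(w) = w^2/2 - v_0 w + \int_0^\infty a^{-1}\psi(aw)\kernel_0(a)\,da$. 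I then set $\zz(t) := \gamma t + \zp(0)$, so that $\dzz \equiv \gamma$ and $\vz(a,t) = a\gamma$ is \emph{unchanged} from the previous proof; this is the decisive point, since the entire geometric argument on $A_{\eta,t}$ depends only on the fixed linear profile $\vz = a\gamma$ and on $\kernel_0$.

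Next I would derive the difference equation for $\hz := \zeps - \zz$. Using the subdifferential representations $\dzeps + \int_\rr \zeta_{\veps}\kernel_\e\,da = v_\e$ and $\gamma + \int_\rr \zeta_{\vz}\kernel_0\,da = v_0$, with measurable selections $\zeta_{\veps}\in\partial\psi(\veps)$ and $\zeta_{\vz}\in\partial\psi(\vz)$, and splitting the kernel term, I get
$$
   \dt\hz + \int_\rr (\zeta_{\veps} - \zeta_{\vz})\kernel_0\,da = (v_\e - v_0) - \cS_\e(t) \,,\qquad \cS_\e(t) := \int_\rr \zeta_{\veps}(\kernel_\e - \kernel_0)\,da \,.
$$
The left-hand side is \emph{identical} to the one appearing in Theorem~\ref{thm.cvg.pcwz.cuu} with kernel $\kernel_0$, so that whole machinery --- the splitting over $A_{\eta,t}$, the bound $|\cR_\eta(t)|\le C\eta/|\gamma|$, the definition of $k_\e$ from $\kernel_0$, the operator $\cT_\e$, and the tail estimate leading to \eqref{eq.hat.zz.lip}--\eqref{eq.tail.estimates} --- transfers verbatim, because all of it is carried by $\kernel_0$ (which has the required $L^1(\rr,(1+a)^2)$ weight) and by $\vz = a\gamma$. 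The only genuinely new objects are the two right-hand perturbations. Since $\psi$ is $L_\psi$-Lipschitz we have $|\zeta_{\veps}|\le L_\psi$, hence $|\cS_\e(t)| \le L_\psi \int_\rr |\kernel_\e - \kernel_0|\,da$ and, after integration, $\int_0^T |\cS_\e|\,dt \le L_\psi \nrm{\kernel_\e - \kernel_0}{L^1(\rr\times(0,T))}$.

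I would then run the comparison argument as before, but with the majorant enlarged by the two nonnegative defects. The arguments of Theorem~\ref{thm.cvg.pcwz.cuu} give $\sgn(\hz)\,\cT_\e[\hz](t) \le m(t) + |\cS_\e(t)| + |v_\e(t)-v_0| =: \widetilde m(t)$, with $m$ as there, and the Kato-type inequality $\cT_\e[|\hz|]\le \sgn(\hz)\,\cT_\e[\hz]$ (which uses only $k_\e\ge 0$) yields $\cT_\e[|\hz|]\le \widetilde m$. Because $\widetilde m\ge 0$, the map $t\mapsto\int_0^t \widetilde m$ is nonnegative and nondecreasing, so $\cT_\e[\int_0^t \widetilde m\,d\tau]\ge \widetilde m$ by the same monotonicity of $\cT_\e$, and the generalized Gronwall lemma \cite[Lemma~3.10, p.~298]{Gripenberg_ea} gives $|\hz(t)|\le\int_0^t\widetilde m$. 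Choosing $\eta = \sqrt{\e|\ln\e|}$ as before then produces
$$
   \nrm{\zeps - \zz}{C([0,T])} \le C\,\e|\ln\e| + L_\psi \nrm{\kernel_\e - \kernel_0}{L^1(\rr\times(0,T))} + \nrm{v_\e - v_0}{L^1(0,T)} \,,
$$
whose right-hand side tends to $0$ by hypothesis, which is the asserted conclusion.

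The hard part will be the bookkeeping of which kernel enters where: the reused estimates (the operator $\cT_\e$, the tail control, and $k_\e\in L^1(\rr,(1+a)^2)$) must be powered \emph{only} by $\kernel_0$, so the splitting has to be arranged so that $\kernel_\e$ appears exclusively through the $L^1$-difference $\kernel_\e - \kernel_0$, for which mere boundedness of $\zeta_{\veps}$ suffices; in particular $\kernel_\e$ need not carry the $(1+a)^2$ weight. One must also verify that adjoining the nonnegative terms $|\cS_\e|$ and $|v_\e - v_0|$ to the majorant preserves the supersolution property $\cT_\e[\int_0^t\widetilde m]\ge\widetilde m$ required by the comparison step, which holds since $\cT_\e$ dominates the time-derivative on nonnegative nondecreasing functions. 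Only qualitative $L^1$-convergence of the data is assumed, so the estimate yields uniform convergence $\zeps\to\zz$ on $[0,T]$ (with rate the maximum of $\e|\ln\e|$ and the data rates) rather than the clean $\e|\ln\e|$ bound of the constant-data case.
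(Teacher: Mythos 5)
Your proposal is correct and follows essentially the same route as the paper's proof: you split the kernel term so that $\kernel_0$ powers the entire $A_{\eta,t}$/$k_\e$/$\cT_\e$ machinery of Theorem \ref{thm.cvg.pcwz.cuu}, you absorb the data dependence into the perturbations $\int_\rr \zeta_{\veps}(\kernel_0-\kernel_\e)\,da$ and $v_\e - v_0$ bounded via $|\zeta_{\veps}|\le L_\psi$, and you conclude with the same comparison/generalized-Gronwall step applied to the enlarged majorant --- exactly the paper's argument, only written out in more detail. Your closing observation that the resulting rate is the maximum of $\e|\ln\e|$ and the $L^1$-convergence rates of the data (rather than a clean $\e|\ln\e|$ bound) is also an accurate reading of what the paper's proof, which merely ``integrates $m$ in time,'' actually delivers.
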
}
{\begin{proof}
		As this is an minor extension of Theorem \ref{thm.cvg.pcwz.cuu} we  only
		point out the necessary extra arguments.
		The difference $\hz$ satisfies now :
		$$
		\dt {\hz} + \int_\rr (\zeta_{\veps}-\zeta_{\vz}) \rhoz(a) da = \int_\rr \zeta_{\veps}(\rhoz-\rhoe) da + v_\e(t) -v_0
		$$		
		which following the same arguments as above becomes :
		$$
		\dt {\hz} + \int_\rr (\veps(a,t)-\vz(a,t))k_\e(a,t) da = - \cR_\eta +
		\int_\rr \zeta_{\veps}(\rhoz-\rhoe) da + v_\e(t) -v_0
		$$
		where $k_\e$ is defined in \eqref{eq.def.k.eps}. Since one obtains as above :
		$$
		\cT_\e [|\hz| ](t) \leq m(t) := C \left( \frac{\eta}{|\gamma|} + \frac{1}{\eta (1+t/\e)}
		+ L_\psi \int_\rr |\rhoe(a,t)-\rhoz(a)| da + |v_\e(t) -v_0|\right).
		$$
		The same comparison principle as in Theorem \ref{thm.cvg.pcwz.cuu}, then provides the claim
		integrating $m$ in time.
\end{proof}}


\begin{rmkm}
	If $\psi$ is only Lipschitz and convex, then its derivative has at most a
	countable set of points in $\RR$ where it is discontinuous. Hypotheses above on $\psi$ assume a finite number of isolated jumps of $\psi'$ on the real line.
	To our knowledge it is not possible to extend the previous proof to this
	general case.	
	Nevertheless, 
	for practical applications (cf, for instance, examples in \cite{pmid29571710} and Section \ref{sec.abs.val}) it seems sufficient.
\end{rmkm}

Here we present a new way to recover large time asymptotics thanks to the $\e$ scaling above.
\begin{theorem}\label{thm.t.large.Lipschitz}
	Under Assumptions \ref{hypo.data}, and assuming that 
	\begin{compactenum}[1)]
		\item  $v_\infty \in \RR$ {
and $v \in W^{1,\infty}	(\rr)$ is such that
$$
\int_\rr \left| v(t)-v_\infty\right| dt < \infty.
$$
}
		\item $\kernel_\infty\in L^1(\rr,(1+a))$ such that {
		$$
		\int_\rr \int_\rr \left| \kernel(a,t)-\kernel_\infty(a)\right| da dt < \infty.
		$$
	}
		\item if $\psi$ satisfies assumptions of Theorem \ref{thm.cvg.pcwz.cuu}, 
	\end{compactenum}
	then  if $z$ solves
	\begin{equation}\label{eq.z.inclusion.eps.one}
		\begin{aligned}
		(v(t)-\dot{z}(t))(w-&z(t)) + \e \int_\rr \psi\left({z(t)-z(t-\e a)}\right)  \kernel(a,t) da   \\
		& \leq    \int_\rr \psi\left({w-\zeps(t-\e a)}\right) \kernel(a,t) da, \quad \forall w \in \RR.
		\end{aligned}
	\end{equation}
	when $t$ goes to infinity, 	
	 there exists $\zz(\tit):=  \gamma t$  such that 
	\begin{equation}\label{eq.asymptotic.lim.time}
	\lim_{t \to \infty} \left| \frac{z(t)}{t}-\zz(1) \right| =0
	\end{equation}
	where $\gamma$ solves \eqref{eq.zz.nl.lip.cst}
\end{theorem}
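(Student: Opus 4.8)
The plan is to turn the long-time asymptotics into the vanishing-$\e$ limit already handled by Theorem \ref{thm.cvg.eps.nl.sing}, via the macroscopic rescaling anticipated in the introduction. I introduce the new time $s=\e t$ and set
$$
  z_\e(s) := \e\, z(s/\e) \,,\qquad \kernel_\e(a,s) := \kernel(a,s/\e) \,,\qquad v_\e(s) := v(s/\e) \,.
$$
Then $\dot z_\e(s)=\dot z(s/\e)$, while $z(s/\e)=z_\e(s)/\e$ and $z(s/\e-a)=z_\e(s-\e a)/\e$. Testing the inclusion \eqref{eq.z.inclusion.eps.one} (taken with $\e=1$) at time $t=s/\e$ against $w=W/\e$ and multiplying the resulting inequality by $\e$, one obtains exactly the $\e$-scaled variational inequality \eqref{eq.diff.inclusion.z.intro} for $z_\e$ with data $(\kernel_\e,v_\e)$ and test point $W$. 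The past datum becomes $z_\e(s)=\e\,\zp(s/\e)$, $s\le 0$, which is bounded and has the \emph{same} Lipschitz constant $L_p$ as $\zp$; hence Assumptions \ref{hypo.data} hold for the rescaled problem uniformly in $\e$.

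Next I check that $(\kernel_\e,v_\e)$ meets the hypotheses of Theorem \ref{thm.cvg.eps.nl.sing} with limit $(\kernel_\infty,\vinf)$. The uniform bounds $\|\kernel_\e\|_{L^\infty}=\|\kernel\|_{L^\infty}$ and $\|v_\e\|_{L^\infty}=\|v\|_{L^\infty}$ are immediate, and the key convergence follows from the change of variables $t=s/\e$, which gains one power of $\e$:
$$
  \int_0^T\!\!\int_\rr |\kernel_\e(a,s)-\kernel_\infty(a)|\,da\,ds = \e\int_0^{T/\e}\!\!\int_\rr |\kernel(a,t)-\kernel_\infty(a)|\,da\,dt \le \e \int_0^\infty\!\!\int_\rr |\kernel-\kernel_\infty|\,da\,dt \to 0 \,,
$$
by assumption 2), and likewise $\int_0^T |v_\e-\vinf|\,ds \le \e\int_0^\infty |v-\vinf|\,dt\to 0$ by assumption 1). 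Assumption 3) keeps the structural hypotheses on $\psi$ unchanged, the moment $\kernel_\infty\in L^1(\rr,(1+a)^2)$ needed in Theorem \ref{thm.cvg.eps.nl.sing} being read off the limiting kernel.

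Theorem \ref{thm.cvg.eps.nl.sing} then provides a unique $\gamma$ solving \eqref{eq.zz.nl.lip.cst} together with the estimate
$$
   |z_\e(s)-\gamma s-\e\,\zp(0)| \le C\,\e|\ln\e| \,,\qquad s\in[0,T] \,,
$$
where, thanks to the uniform bounds above, $C$ does not depend on $\e$ (this step uses $\gamma\neq 0$, exactly as in Theorem \ref{thm.cvg.pcwz.cuu}). To conclude I unwind the scaling by choosing $T=1$ and $\e=1/t$: the endpoint value is $z_\e(1)=\e\,z(1/\e)=z(t)/t$, whence
$$
  \left|\frac{z(t)}{t}-\gamma\right| \le C\,\e|\ln\e| + \e\,|\zp(0)| = \frac{C\ln t}{t} + \frac{|\zp(0)|}{t} \,,
$$
which vanishes as $t\to\infty$. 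This is precisely \eqref{eq.asymptotic.lim.time}, the limit $\zz(\tit)=\gamma t$ of the statement satisfying $\zz(1)=\gamma$.

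The main obstacle is the rescaling step itself: one must verify that $s\mapsto z_\e(s)$ is an \emph{exact} solution of the $\e$-scaled inclusion \eqref{eq.diff.inclusion.z.intro} (so that Theorem \ref{thm.cvg.eps.nl.sing} applies verbatim), and that the \emph{time}-integrability of the data discrepancies assumed in 1)--2) is exactly the ingredient that, after the substitution $t=s/\e$ and the resulting factor $\e$, forces $\kernel_\e-\kernel_\infty$ and $v_\e-\vinf$ to be small in $L^1(0,T)$ with an $\e$-uniform constant. Once these are settled, the passage $t\to\infty$ is a routine substitution.
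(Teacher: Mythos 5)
Your proof is correct and follows essentially the same route as the paper's: rescale time via $z_\e(s)=\e\,z(s/\e)$ so that the long-time limit becomes the $\e\to0$ limit, check that the rescaled data $(\kernel_\e,v_\e)$ satisfy the hypotheses of Theorem \ref{thm.cvg.eps.nl.sing} (the time-integrability assumptions 1)--2) gaining the factor $\e$ under the substitution $t=s/\e$), and unwind the scaling at $s=1$, $\e=1/t$. In fact your write-up is more explicit than the paper's own proof, which asserts without computation both that the rescaled function solves \eqref{eq.diff.inclusion.z.intro} (your test-point substitution $w=W/\e$ and multiplication by $\e$) and that the data discrepancies become small in $L^1(0,T)$ uniformly in $\e$.
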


\begin{proof}
		We consider the solution $z$ of the problem \eqref{eq.z.nl} on the time interval $(0,1/\e)$,
		where $\e>0$ is an arbitrarily small parameter.
		We set $\zeps(\tit):=\e z(\tit/\e)$ and $\zpeps(\tit) := \e \zp(\tit/\e)$, then one has :
		\begin{equation}\label{eq.chg.var}
		\begin{aligned}
		\dtit \zeps (\tit) & = \dt z(\tit/ \e), \\
		\veps(a,\tit)&  := \frac{\zeps(\tit)-\zeps(\tit-\e a)}{\e}=  z(\tit/\e )-z(\tit/\e -a)=:u(a,\tit/\e)
		\end{aligned}
		\end{equation}
		So, if $z$ solves \eqref{eq.z.inclusion.eps.one}, then
		$\zeps$ solves \eqref{eq.diff.inclusion.z.intro}.
		By Theorem \ref{thm.cvg.eps.nl.sing}, 
		$\zeps(\tit)$ converges to $\zz(\tit):=\int_0^{\tit} \gamma(\tau) d\tau$ 
		in $C([0,1])$. This gives for instance that
		$$
		\lim_{\e\to 0} | \zeps(1) - \zz(1) | =0.
		$$
		One then returns to $z$ thanks to the change of unknowns and 
		setting $t=1/\e$ implies \eqref{eq.asymptotic.lim.time} 
		which completes the claim.
	\end{proof}


%
%
%

\section{An example from the literature}\label{sec.abs.val}
Here we consider the elastic response $\psi(u)=|u|$. 
In a first step assuming that the data $(\kernel,v)$
are constant in time, we study the asymptotic limit
\eqref{eq.zz.nl.lip.cst} and solve it explicitly (cf section \ref{sec.asymptotic}). 

Then  
assuming  a specific form of linkages' distribution
we do not account for any past positions at time $t=0$. 
We show, in this framework,  that it is possible to 
solve explicitly \eqref{eq.zz.nl.lip}  in section \ref{sec.exact} and we illustrate 
numerically this fact in the last part.
\subsection{Study of the limit equation \eqref{eq.zz.nl.lip.cst}}\label{sec.asymptotic}
\begin{propm}
	We suppose that the kernel $\kernel$ is non-negative and satisfies 
	$\kernel(a,t)=\rhoinf(a) \in L^1(\rr)$.
	Assume that $\gamma(t)$ solves \eqref{eq.zz.nl.lip} then it is constant and
	\begin{enumerate}[i)]
		\item if $\gamma>0$  then $v_\infty = \gamma + \mu_\infty$,
		\item  if $\gamma<0$  then $v_\infty = \gamma - \mu_\infty$, 
		\item if $\gamma=0$  then $\vinf \in [-\mu_\infty,\mu_\infty]$,
		\item If $\vinf \in [-\mu_\infty,\mu_\infty]$ then $\gamma=0$
	\end{enumerate}
\end{propm}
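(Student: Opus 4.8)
The plan is to use the equivalence, recorded just after Theorem \ref{thm.z.eps.exist.inclusion}, between the variational inequality \eqref{eq.zz.nl.lip} and the minimization of the convex functional
$$
   \mathcal{J}_t(w) = \frac{w^2}{2} - v(t)\,w + \int_0^\infty \frac{\psi(aw)}{a}\,\kernel(a,t)\,da \,,
$$
so that \eqref{eq.zz.nl.lip} reads $0 \in \partial\mathcal{J}_t(\gamma(t))$ for $\gamma = \dzz$. Here the data $(\kernel,v)=(\rhoinf,\vinf)$ are constant in time and $\psi(u)=|u|$ gives $\psi(aw)/a = |w|$ for $a>0$, so the functional is time-independent and fully explicit,
$$
   \mathcal{J}(w) = \frac{w^2}{2} - \vinf\, w + \mu_\infty\,|w| \,,\qquad \mu_\infty = \int_\rr \rhoinf(a)\,da \,.
$$
Since $\mathcal{J}$ is strictly convex and coercive it admits a unique minimizer, and being independent of $t$ this minimizer is a single constant $\gamma$; this already establishes the asserted constancy of $\gamma(t)$.

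It then remains to solve the scalar optimality condition $0\in\partial\mathcal{J}(\gamma)$. Writing $\partial\mathcal{J}(\gamma) = \gamma - \vinf + \mu_\infty\,\partial|\gamma|$, with $\partial|\gamma|=\{\sgn(\gamma)\}$ for $\gamma\neq 0$ and $\partial|0|=[-1,1]$, I would split into the three cases. For $\gamma>0$ the condition becomes $\gamma-\vinf+\mu_\infty=0$, i.e. $\vinf=\gamma+\mu_\infty$ (case i); for $\gamma<0$ it becomes $\gamma-\vinf-\mu_\infty=0$, i.e. $\vinf=\gamma-\mu_\infty$ (case ii); for $\gamma=0$ it reads $0\in-\vinf+\mu_\infty[-1,1]$, i.e. $\vinf\in[-\mu_\infty,\mu_\infty]$ (case iii).

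For the converse (iv) I would combine uniqueness of the minimizer with the trichotomy just obtained: the three regimes correspond respectively to $\vinf>\mu_\infty$, $\vinf<-\mu_\infty$, and $\vinf\in[-\mu_\infty,\mu_\infty]$, which partition $\RR$; hence $\vinf\in[-\mu_\infty,\mu_\infty]$ forces $\gamma=0$. The only point needing genuine care is the boundary $\vinf=\pm\mu_\infty$, where one must use that $\partial|0|$ is the \emph{closed} interval $[-1,1]$, so the endpoints are captured by $\gamma=0$ and not by a nonzero velocity. If one prefers to bypass the minimization reformulation, the same conclusions follow directly from \eqref{eq.zz.nl.lip} by reducing it to $\vinf-\gamma\in\partial g(0)$ with $g(\hat w)=\int_\rr(|a\gamma+\hat w|-|a\gamma|)\,\rhoinf(a)\,da$ and pushing the one-sided difference quotients through the integral by dominated convergence; the dominating bound $\bigl||a\gamma+\hat w|-|a\gamma|\bigr|\le|\hat w|$ both justifies this passage and shows that all integrals are finite using only $\rhoinf\in L^1(\rr)$, with no first-moment assumption.
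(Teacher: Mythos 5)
Your proof is correct, but it takes a genuinely different route from the paper's. The paper never collapses the integral: it works directly on \eqref{eq.zz.nl.lip}, testing with $w<0$ (for $\gamma>0$), splitting $\int_\rr|a\gamma+w|\rhoinf(a)\,da$ at $a=-w/\gamma$ according to the sign of $a\gamma+w$, and letting $w\to 0^-$ via Lebesgue's theorem to obtain $\vinf-\mu_\infty\ge\gamma$, then testing with $w>0$ for the reverse inequality (symmetrically for $\gamma<0$); part (iv) is proved there by a uniqueness trichotomy among hypothetical solutions of different signs, together with the check that $\gamma=0$ solves the inequality. You instead invoke the minimization reformulation recorded after Theorem \ref{thm.z.eps.exist.inclusion} and the homogeneity $\psi(aw)/a=|w|$ for $a>0$, which reduces the whole proposition to the one-variable convex function $\mathcal{J}(w)=\tfrac{w^2}{2}-\vinf w+\mu_\infty|w|$, whose optimality condition $0\in\partial\mathcal{J}(\gamma)$ is solved by inspection; your (iv) follows from the observation that the three regimes partition $\RR$, with the endpoints $\vinf=\pm\mu_\infty$ correctly captured by the $\gamma=0$ regime. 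In substance the two proofs compute the same object --- the paper's limits $w\to 0^\pm$ are exactly a hand evaluation of the one-sided derivatives of the right-hand side of \eqref{eq.zz.nl.lip}, i.e.\ of $\partial g(0)$ in your fallback notation --- but yours is shorter and more structural, and it buys something concrete: the paper's manipulation isolates the term $\gamma\int_\rr a\rhoinf(a)\,da$, so it implicitly requires a finite first moment (supplied by the standing Assumptions \ref{hypo.data}, not by the proposition's literal hypothesis $\rhoinf\in L^1(\rr)$), whereas your difference form $g(\hat w)=\int_\rr\left(|a\gamma+\hat w|-|a\gamma|\right)\rhoinf(a)\,da$ with the bound $\bigl||a\gamma+\hat w|-|a\gamma|\bigr|\le|\hat w|$ runs on $L^1$ alone. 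One caveat: the equivalence with the minimization problem is asserted in the paper without proof, so leaning on it as a black box would inherit that gap; but your dominated-convergence fallback is precisely its justification for $\psi(u)=|u|$, so your argument is self-contained either way.
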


\begin{proof}
	As in the proof of Theorem \ref{thm.cvg.pcwz.cuu}, if $\gamma$ solves \eqref{eq.zz.nl.lip} 
	with constant data, it is constant.

	In the first case, if $\gamma>0$, then choosing $w<0$ implies that
	$$
	\begin{aligned}
	w (\vinf-\gamma) & + \gamma \int_\rr a \rhoinf da   \\
	  & \leq \gamma \left( \int_{-\frac{w}{\gamma}}^\infty a \rhoinf da - \int_0^{-\frac{w}{\gamma}} a \rhoinf da \right)
	 + w  \left( \int_{-\frac{w}{\gamma}}^\infty  \rhoinf da - \int_0^{-\frac{w}{\gamma}}  \rhoinf da \right) .
	\end{aligned}
	$$
	Using Lebesgue's Theorem and taking the limit when $w$ goes to $0^-$ gives
	that $\vinf - \mu_\infty \geq \gamma > 0$.
	In a same way, if $\gamma <0$, expressing \eqref{eq.zz.nl.lip} for positive values of $w$ and taking the limit when $w\to 0^+$ provides that 
	$v+\mu_\infty \leq \gamma < 0$.
	
	On the other hand if $\gamma >0$ (resp. $\gamma <0$) then choosing $w>0$
	(resp. $w<0$) gives straightforwardly that $\vinf - \mu_\infty \leq \gamma$
	(resp. $\vinf + \mu_\infty \geq \gamma$), which concludes the proof of  i) and ii).
	Taking $\gamma=0$ in \eqref{eq.zz.nl.lip} provides that
	$$
	\vinf w \leq \mu_\infty |w|
	$$
	which ends the third claim.
	
	For the last part, if there exists two distinct non-zero solutions $\gamma_i$ for $i\in\{1,2\} $,
	if they have the same sign, they are equal since then i) or ii) hold.
	If their signs are opposite then we end up with a contradiction since then
	$\vinf - \mu_\infty >0$ and $\vinf + \mu_\infty <0$ at the same time.
	Remains  the case when one of the two solution only is zero (for instance $\gamma_1=0$). 
	In this case again we have a contradiction since then 
	$
	\vinf \notin [-\mu_\infty; \mu_\infty] 
	$ (since $\gamma_2 \neq 0$) 
	and $\vinf \in [-\mu_\infty; \mu_\infty] $.
	
	If $\vinf \in (-\mu_\infty,\mu_\infty)$, then $\gamma=0$ is a solution of \eqref{eq.zz.nl.lip} since
	$$
	\vinf w \leq \mu_\infty |w|, \quad \forall w \in \RR
	$$
	which is \eqref{eq.zz.nl.lip} for $\gamma=0$. By uniqueness, it is the only one.
\end{proof}

In fig. \ref{fig.gamma}, we plot the solution $\gamma$ of \eqref{eq.zz.nl.lip} in
the case when $\kernel(a,t)=\rhoinf(a)$ and $v=\vinf$.
\begin{figure}[ht!]
	\begin{center}
		\begin{tikzpicture}
		\begin{axis}[my style, minor tick num=1]
		\addplot[domain=-3:0,red,line width=1.0mm]{x-1};
		\addplot[domain=0:3,red,line width=1.0mm]{x+1};
		\addplot[red,line width=1.0mm] coordinates {(0,-1) (0,1)};
		%
		\addplot[domain=-5:5, blue]{0.75};
		\addplot[blue] coordinates  {(3,0.8)} node[above] {$\vinf$};
		\end{axis}
		\end{tikzpicture}
	\end{center}
	\caption{The velocity-force diagram when $\psi(u)=|u|$ and $\int_\rr \rhoinf (a)da =1$}\label{fig.gamma}
\end{figure}
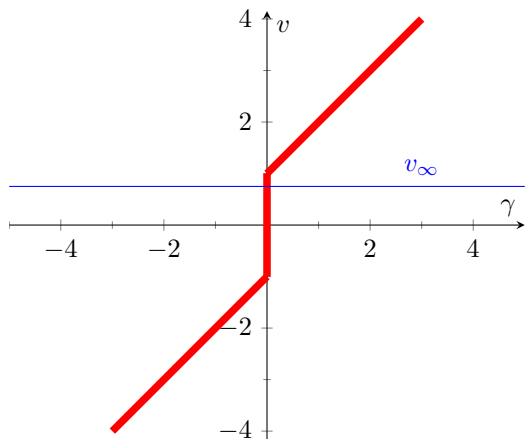

%
\newcommand{\dotz}{\dot{z}}
\subsection{The exact solution of \eqref{eq.diff.inclusion.z.intro}} \label{sec.exact}
We assume here in \eqref{eq.diff.inclusion.z.intro} that the kernel is such that
$\kernel(a,t) = \rhoinf(a) \chiu{\{a<t\}}(a,t)$.
Thus, we solve the problem : find $z \in \Lip (\rr)$ solving
\begin{equation}\label{eq.sec.333}
	(\vinf - \dotz(t)) w + \int_0^t \rhoinf(a) | u(a,t) | da \leq 
	\int_0^t \rhoinf(a) | u(a,t) + w| da, \quad \forall t > 0,
\end{equation}
 together with the initial condition $z(0)=z^0$.
 
 \begin{theorem}\label{thm.plastic}
 	Assume that $\rhoinf$ is a positive monotone non-increasing function in $L^1(\rr)$.
 	We set $\mu_\infty(t) = \int_0^t \rhoinf(a) da$ that tends to $\mu_\infty$ when $t$ goes to infinity.
 	Let's assume moreover that $\vinf \in[-\mu_\infty, \mu_\infty]$  then 
 	the only solution of \eqref{eq.sec.333} is 
 	\begin{equation}\label{eq.zinf}
 		z(t) = \begin{cases}
 		z^0+ \int_0^t[\vinf-\mu_\infty(\tau)]_+ d\tau, & \text{if} \; \vinf \geq 0,\\
 		z^0+ \int_0^t[\vinf+\mu_\infty(\tau)]_- d\tau, & \text{if} \; \vinf \leq 0. 
 		\end{cases}
 	\end{equation}
 	which tends as $t$ grows large to $\zinf = z(t_1)$ where $t_1$ is such that $\mu_\infty(t_1)=\vinf$.
 \end{theorem}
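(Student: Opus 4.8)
The plan is to recast the variational inequality \eqref{eq.sec.333} as a subdifferential inclusion and to exploit that, for $\psi(u)=|u|$, the relevant subdifferential collapses to a single point whenever $u(\cdot,t)$ does not vanish. Writing $u(a,t)=z(t)-z(t-a)$, inequality \eqref{eq.sec.333} is exactly $\vinf-\dotz(t)\in\partial_w\big[\int_0^t\rhoinf(a)|u(a,t)+w|\,da\big]_{w=0}$, and since the integrand is convex in $w$ this subdifferential equals $\{\int_0^t\rhoinf(a)\zeta(a)\,da:\ \zeta(a)=\sgn u(a,t)\text{ where }u\neq0,\ \zeta(a)\in[-1,1]\text{ where }u=0\}$. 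In particular, if $u(a,t)>0$ for a.e. $a\in(0,t)$ then the inclusion forces the \emph{equation} $\dotz(t)=\vinf-\mu_\infty(t)$, while testing with $w>0$ and $w<0$ always yields the a priori bounds $\vinf-\mu_\infty(t)\le\dotz(t)\le\vinf+\mu_\infty(t)$. I treat the case $\vinf\ge0$; the case $\vinf\le0$ follows by the substitution $z\mapsto-z$, $\vinf\mapsto-\vinf$, using that $\psi$ is even. Throughout, $t_1$ is defined by $\mu_\infty(t_1)=\vinf$.

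For existence I would verify directly that the candidate \eqref{eq.zinf} solves \eqref{eq.sec.333}. On $[0,t_1]$ one has $\dotz(t)=\vinf-\mu_\infty(t)>0$, so $z$ is strictly increasing and $u(a,t)>0$ for every $a\in(0,t)$, whence the inclusion holds as the singleton identity above. For $t>t_1$ the candidate is flat, $z\equiv\zinf$, so $u(a,t)=0$ on $(0,t-t_1)$ and $u(a,t)>0$ on $(t-t_1,t)$; the admissible interval is then $[\mu_\infty(t)-2\mu_\infty(t-t_1),\mu_\infty(t)]$, and membership of $\vinf$ (with $\dotz=0$) reduces to $\mu_\infty(t)-\mu_\infty(t_1)\le 2\mu_\infty(t-t_1)$. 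This is where monotonicity of $\rhoinf$ enters: $\int_{t_1}^t\rhoinf=\int_0^{t-t_1}\rhoinf(\sigma+t_1)\,d\sigma\le\int_0^{t-t_1}\rhoinf=\mu_\infty(t-t_1)$, which gives the bound.

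Uniqueness is the heart of the proof and I would split it into three steps. First, any solution satisfies $\dotz(t)\ge\vinf-\mu_\infty(t)>0$ on $[0,t_1)$ by the a priori lower bound, hence is strictly increasing there, so $u>0$, the inclusion is a singleton, and $\dotz(t)=\vinf-\mu_\infty(t)$ is forced; in particular every solution reaches $z(t_1)=\zinf$ and coincides with \eqref{eq.zinf} on $[0,t_1]$. Second, to show $z\le\zinf$ everywhere I would argue with the running maximum $M(t):=\max_{[0,t]}z$: at a.e. point where $\dot M(t)>0$ one has $z(t)=M(t)>z(s)$ for all $s<t$, so $u(a,t)>0$ for all $a\in(0,t)$ and the singleton identity forces $\dotz(t)=\vinf-\mu_\infty(t)$; but such a point with $M(t)>\zinf$ necessarily has $t>t_1$, making $\dotz(t)<0$, which contradicts $\dotz(t)\ge0$ at an increase point of $M$. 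Hence $M$ cannot exceed $\zinf$. Third, to show $z\ge\zinf$ on $[t_1,\infty)$ I would use the running minimum $\tilde m(t):=\min_{[t_1,t]}z$: at a strict decrease point $t>t_1$ (the case $z(t)<z^0$ being even simpler, since then $u(\cdot,t)<0$ throughout and $\dotz(t)=\vinf+\mu_\infty(t)>0$) one finds $u(\cdot,t)<0$ on $(0,t-\theta)$ and $u(\cdot,t)>0$ on $(t-\theta,t)$, where $\theta\in[0,t_1)$ solves $z(\theta)=z(t)$ along the increasing branch identified in step one, so the singleton identity gives $\dotz(t)=\vinf-\mu_\infty(t)+2\mu_\infty(t-\theta)$; combining $\dotz(t)\le0$ with $\theta\le t_1$ and the same monotonicity estimate $\mu_\infty(t)-\mu_\infty(t_1)\le\mu_\infty(t-t_1)\le\mu_\infty(t-\theta)$ yields the impossible inequality $2\mu_\infty(t-\theta)\le\mu_\infty(t)-\mu_\infty(t_1)\le\mu_\infty(t-\theta)$. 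Thus $\tilde m\equiv\zinf$, and with $z\le\zinf$ this pins $z\equiv\zinf$ on $[t_1,\infty)$, giving both the formula \eqref{eq.zinf} and the limit $z(t)\to\zinf=z(t_1)$.

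The main obstacle is the uniqueness after the stopping time $t_1$: there the inclusion is genuinely set-valued, because $u(\cdot,t)$ vanishes on a set of positive measure and the subdifferential is a nondegenerate interval, so a naive monotonicity/Grönwall comparison of two solutions does \emph{not} close. The resolution is to track the running extrema and evaluate the inclusion precisely at their points of variation, where the vanishing set of $u$ is negligible and the inclusion becomes an equation; note that the contradiction in the lower-bound step crucially uses that $\rhoinf$ is non-increasing. I would also need the minor technical facts that $M$ and $\tilde m$ are Lipschitz, that $\{\dot M>0\}$ consists of strict increase points of $M$ (and symmetrically for $\tilde m$), and that these sets carry the a.e.-differentiability of $z$ together with the a.e.-validity of \eqref{eq.sec.333}.
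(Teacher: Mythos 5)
Your proof is correct, and it deserves a comparison with the paper's, because while the skeleton is the same---the two-sided bound $|\vinf-\dot z(t)|\le\mu_\infty(t)$ from testing with $w\gtrless0$, the forced identity $\dot z(t)=\vinf-\mu_\infty(t)$ on $[0,t_1)$ where the lower bound makes $z$ strictly increasing, and a contradiction after $t_1$ resting on the sign pattern of $u(\cdot,t)$ plus the monotonicity of $\rhoinf$ (your chain $2\mu_\infty(t-\theta)\le\mu_\infty(t)-\mu_\infty(t_1)\le\mu_\infty(t-\theta)$ is the same rearrangement the paper performs with its time $\tau_1$)---the mechanism you use to close the argument after $t_1$ is genuinely different, and more robust. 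The paper assumes $\dot z$ is sign-definite on some interval $(t_1,t_1+\delta)$ and refutes each sign separately with explicit test values ($w=z(t-\eta)-z(t)$, $\eta\to0$, then $w=z(t_1)-z(t)$); this trichotomy is not logically exhaustive, since nothing excludes a priori a solution whose derivative changes sign on every subinterval of $(t_1,\infty)$. Your running-extrema device repairs exactly this: at an a.e.\ point $t>t_1$ with $\dot M(t)>0$, the monotonicity of $M$ forces $z(s)<z(t)=M(t)$ for \emph{all} $s<t$ (if $z(s)=M(t)$ then $M$ is constant on $[s,t]$, contradicting $\dot M(t)>0$), so the vanishing set of $u(\cdot,t)$ is null, the inclusion collapses to $\dot z(t)=\vinf-\mu_\infty(t)\le0$, contradicting $\dot z(t)\ge\dot M(t)>0$; symmetrically for $\tilde m$, and then $\dot M=0$ and $\dot{\tilde m}=0$ a.e.\ pin $z\equiv\zinf$ on $[t_1,\infty)$. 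Beyond this, you verify that the candidate \eqref{eq.zinf} actually satisfies \eqref{eq.sec.333}---the existence half, which uses monotonicity of $\rhoinf$ a second time on the flat branch, and which the paper does not carry out (its argument only shows that any solution must have this form). Two small remarks: the measurable-selection representation of the subdifferential can be dispensed with, since wherever you need the singleton property the zero set of $u(\cdot,t)$ is null, so differentiability of $w\mapsto\int_0^t\rhoinf(a)|u(a,t)+w|\,da$ at $w=0$ follows from dominated convergence, while for existence you only need the elementary direction that every selection integral is a subgradient; and the boundary case $\vinf=\pm\mu_\infty$, where $t_1=\infty$ so that your steps two and three are vacuous and step one alone gives the formula on all of $\rr$, deserves the one sentence the paper gives it.
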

 
\begin{proof} 
We assume hereafter that $\mu_\infty > \vinf \geq 0$, since the opposite case  works the same.	
 A simple computation gives that
 $$
 | \vinf - \dotz | \leq \int_0^t \rhoinf (a )da =: \mu_\infty(t),
 $$
 which shows that 
  $0< \vinf -\mu_\infty(t) \leq \dz(t)$ on $[0,t_1)$, where $t_1$ 
  is the  time  for which $\mu_\infty(t_1) = \vinf$.

 In this case setting $u(a,t):= \int_{t-a}^t \dotz(\tau) d\tau$, shows that
 $u(a,t) \geq 0$, for $(a,t) \in \{ (a,t) \in [0,t_1]^2$ such that $ \; a\leq t\}=:\Gamma(t_1)$. For $t$ fixed one has that $u(a,t)$ is increasing with respect to $a \in [0,t]$ and absolutely continuous. Thus 
 there exists $a(w) \in [0,t]$ such that $u(a,t) \leq w$ for all $a \in [0,a_0(w)]$
 and $u(a,t)\geq w$ for $a\in [a_0(w),t]$, this gives
 $$
 (\vinf-\dz(t),-w) \leq w \left( \int_0^{a_0(w)} \rhoinf (a)da - \int_{a_0(w)}^{t} \rhoinf(a)da \right)
 - 2 \int_0^{a_0(w)} \rhoinf(a)u(a,t) da, 
 $$
  for all $w \in [0,u(t,t)]$, then passing to the limit wrt $w\to 0$ gives thanks to the
  integrability of $\rhoinf(a) u(a,t)$ close to $a=0$, and since $a_0(w) \to 0$
  when $w\to 0$, that  : $\dz(t) \leq \vinf - \mu_\infty(t)$.
 So on $[0,t_1]$, 
 \begin{equation}\label{eq.dz}
 	\dz(t) = \vinf-\mu_\infty(t)
 \end{equation}
 Thus $u(a,t)= \int_{t-a}^{t} \vinf - \mu_\infty(\tau) d \tau$ for every $(a,t)\in \Gamma(t_1)$.

We assume that on $(t_1,t_1+\delta)$, with $\delta$ a small positive parameter, $\dz$ is negative definite. We fix 
$t\in (t_1,t_1+\delta)$. As $z$ is monotone increasing
on $(0,t_1)$, there exists $\tau_1$ such that for all $\tau \leq \tau_1$,
$z(\tau) \leq z(t)$, while for  $\tau \in (\tau_1,t)$, $z(\tau )\geq z(t)$.
We set $\eta >0$ a small parameter such that $t-\eta$ still belongs to $(t_1,t_1+\delta)$,
there exists $\tau_2$ depending on $\eta$ such that $z(\tau)$ is in $(z(t-\eta),z(t_1))$
for $\tau \in (\tau_2,t-\eta)$, while $z(t-\eta) > z(\tau) $ for 
$\tau$ in $(0,\tau_2) \cup (t-\eta,t)$ (see fig. \ref{fig.tau}).

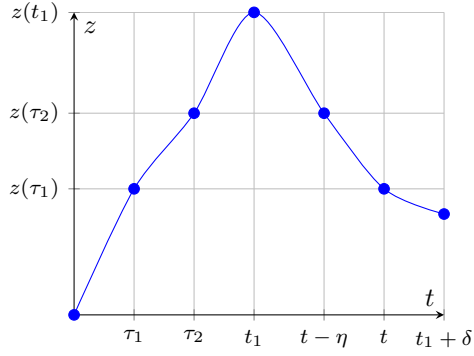
\begin{figure}
	\begin{center}
\begin{tikzpicture}
\begin{axis}[small,axis x line=middle, axis y line= middle, xlabel={$t$}, ylabel={$z$},
xtick={0,3,6,9,12.5,15.5,18.5},
xticklabels={%
	0,$\tau_1$,$\tau_2$,$t_1$,$t-\eta$,$t$, $t_1+\delta$},
ytick={0,2.5,4,6},	
yticklabels={
	0,$z(\tau_1)$,$z(\tau_2)$,$z(t_1)$
},
grid=major
]
\addplot[smooth,blue,mark=*]coordinates 	{	
	(0, 	0 ) 	
	(	3 ,  2.5)   
	(	6 ,	4 )    
	(	9  , 6)	  
	(12.5 ,4  )   
	(15.5,	2.5)
	(18.5, 2)}	;
\end{axis}
\end{tikzpicture}
\end{center}
\caption{When we assume that $\dz(t)<0$ on $(t_1,t_1+\delta)$}\label{fig.tau}
\end{figure}
One recovers from \eqref{eq.sec.333}, that
$$
\begin{aligned}
(\vinf-\dz(t)) & (z(t-\eta)-z(t)) \\
& + \underbrace{\int_0^{\tau_1} (z(t)-z(\tau) ) \rhoinf(t-\tau) d\tau
	+ \int_{\tau_1}^{t} (z(\tau) -z(t)) \rhoinf(t-\tau) d\tau}_{I_1} \\
&\quad \leq 
\underbrace{\int_0^{t}|(z(t-\eta)-z(\tau) | \rhoinf(t-\tau) d\tau}_{I_2}.
\end{aligned}
$$
We analyze the terms $I_1$ and $I_2$ :
$$
I_1 = z(t)  \left( \int_0^{\tau_1} - \int_{\tau_1}^t  \right)  \rhoinf(t-\tau) d\tau + \left( \int_{\tau_1}^t-\int_0^{\tau_1}   \right) z(\tau) \rhoinf(t-\tau) d\tau ,
$$
while 
$$
\begin{aligned}
I_2 & =  z(t-\eta )\left(\int_{t-\eta}^{t} + \int_{0}^{\tau_2} - \int_{\tau_2}^{t-\eta}\right) \rhoinf(t-\tau) d \tau \\
& - \left(\int_{t-\eta}^{t} + \int_{0}^{\tau_2} - \int_{\tau_2}^{t-\eta}\right) z(\tau )\rhoinf(t-\tau) d \tau.
\end{aligned}
$$
This leads to write :
$$
\begin{aligned}
& (\vinf-\dz(t))  (z(t-\eta)-z(t)) + 
(z(t) - z(t-\eta))\left\{ \left( \int_{0}^{\tau_1} - \int_{ \tau_1}^{t}\right) \rhoinf(t-\tau) d\tau   \right\}\\
& + 2  \left( \int_{\tau_1}^{\tau_2} + \int_{t-\eta}^{t}\right) (z(\tau )- z(t-\eta ))\rhoinf(t-\tau) d\tau \leq 0.
\end{aligned}
$$
Factorizing the difference $z(t-\eta)-z(t)$ and dividing by $\eta$ leads to write :
$$
\begin{aligned}
(\vinf -\dz(t) -&\mu_\infty(t)+ 2 \mu_\infty(t-\tau_1)) \frac{(z(t-\eta)-z(t))}{\eta} \\
&+ \underbrace{\frac{2}{\eta}\left( \int_{\tau_1}^{\tau_2} + \int_{t-\eta}^{t}\right) (z(\tau )- z(t-\eta ))\rhoinf(t-\tau) d\tau }_{I_3} \leq 0
\end{aligned}
$$
As $z(\tau)$ is monotone either on $(\tau_1,\tau_2)$ or on $(t-\eta,t)$, the latter term can be estimated as 
$$
\left|I_3\right| \leq \left| \frac{z(t-\eta)-z(t)}{\eta}\right| \left(\int_{ \tau_1}^{\tau_2} + \int_{t-\eta}^t \rhoinf(t-\tau) d\tau \right) \leq C \nrm{\dz}{L^\infty(\rr)} o_\eta(1) 
$$
since $\tau_2$ tends to $\tau_1$ as $\eta$ tends to zero.
One concludes making $\eta$ tend to zero that
$$
(\vinf -\dz(t) -\mu_\infty(t)+ 2 \mu_\infty(t-\tau_1)) ( - \dz(t)) \leq 0
$$
which we divide by $-\dz(t)$, since it is a positive definite quantity by hypothesis.
This leads to
$$
\underbrace{\vinf - \int_{t-\tau_1}^t \rhoinf(a) da }_{I_4(t)} + \mu_\infty(t-\tau_1) \leq \dz.
$$
Then, assuming that $\rhoinf$ is a monotone non-increasing function, shows that $I_5(t) := \int_{t-\tau_1}^t \rhoinf(a) da$ 
is decreasing as well, thus 
$$
I_4(t)=\vinf - I_5(t) \geq \vinf - I_5(\tau_1) = \vinf - \mu_\infty(\tau_1) \geq 0
$$
the latter estimate being true since $\tau_1 < t_1$, which finally
gives that
$$
\mu_\infty(t-\tau_1) \leq \dz.
$$
The latter quantity is strictly positive since $t>t_1>\tau_1$, this leads to a contradiction.  
Indeed, because $\mu_\infty(t_1)=\vinf$ and $\lim_{t \to \infty} \mu_\infty(t) = \mu_\infty > \vinf$,
 there exists an open  set $M \subset (t_1,\infty)$ of positive measure on which $\rhoinf(a)>0$ for a.e. $a\in M$. Since $\rhoinf(a)$ is decreasing there  exist $a_0 \in  M$ such that $\sup_M \rhoinf \geq \rhoinf(a_0)  >0$.
Take $\delta < a_0-t_1$ which implies that $t \in (t_1,a_0)$ then 
$$
\mu_\infty(t-\tau_1 ) := \int_0^{t-\tau_1} \rhoinf(a) da \geq \rhoinf(a_0)\int_0^{t-\tau_1} da = (t-\tau_1) \rhoinf(a_0) > 0.
$$
Thus $\dz$ cannot be negative definite.

We assume now that for $t\in(t_1,t_1+\delta)$, $\dz(t)>0$. We fix $t$ as above.
Again using \eqref{eq.sec.333}, one obtains~:
$$
\left( \vinf - \dz(t) \right)(z(t_1)-z(t)) + \int_{0}^{t} (z(t)-z(t-a)) \rhoinf(a ) da \leq 
\int_{0}^{t} (z(t_1)-z(t-a)) \rhoinf(a ) da
$$
which transforms into :
$$
(\vinf - \dz(t) -\mu_\infty(t) ) (z(t_1) - z(t )) \leq 0
$$
which leads to 
$$
\dz \leq \vinf - \mu_\infty(t) < 0
$$
which again is a contradiction.
Thus $\dz$ must be zero on a positive neighborhood of $t_1$.

Since both arguments extend to any interval $I \in (t_1,\infty)$ the claim is proved when $\vinf \in (-\mu_\infty,\mu_\infty)$.  
For the particular case when $\vinf = \pm \mu_\infty$, the time $t_1$ such that $\mu_\infty(t)=\pm \vinf$ is infinite. 
Thus \eqref{eq.dz} remains true on $\rr$ if $\vinf=\mu_\infty$ and $\dz(t)=\vinf+\mu_\infty(t)$ if $\vinf=-\mu_\infty$.
This can be rewritten as
$$
\begin{aligned}
z(t) & =  z^0 + \sgn(\vinf) \int_0^t \int_{\tau}^{\infty} \rhoinf(a)da d\tau \\
& = z^0 +
\sgn(\vinf) \left\{ \int_0^t a \rhoinf(a)da + t \int_t^\infty \rhoinf(a)da\right\} .
\end{aligned}
$$
\end{proof}

\begin{corom}
	Under the same hypotheses as above, but if $\vinf \notin [-\mu_\infty,\mu_\infty]$, then 
	$$
		z(t)  = z^0 +  
		\int_{0}^{t} \left( \vinf - \sgn(\vinf ) \mu_\infty(\tau) \right)d\tau 
		= z^0 + \gamma t + \sgn(\vinf ) \int_0^t  \int_{\tau}^{\infty} \rhoinf (a) da d \tau 
	$$
\end{corom}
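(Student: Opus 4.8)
The plan is to re-run the first part of the proof of Theorem~\ref{thm.plastic}, observing that the strengthened hypothesis $\vinf\notin[-\mu_\infty,\mu_\infty]$, i.e. $|\vinf|>\mu_\infty$, forces the ``good'' regime to hold on all of $\rr$, so that the delicate contradiction argument on the interval $(t_1,\infty)$ is entirely bypassed (here $t_1=+\infty$). I assume without loss of generality that $\vinf>\mu_\infty>0$; the case $\vinf<-\mu_\infty$ is symmetric and follows by replacing $w$ with $-w$ throughout.

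First I would extract the two-sided bound exactly as at the start of Theorem~\ref{thm.plastic}: testing \eqref{eq.sec.333} and using $|u(a,t)+w|\le|u(a,t)|+|w|$ gives $(\vinf-\dz(t))w\le|w|\,\mu_\infty(t)$ for every $w\in\RR$, hence $|\vinf-\dz(t)|\le\mu_\infty(t)$. Since $\mu_\infty(t)\le\mu_\infty<\vinf$ for all $t$, the lower bound yields $\dz(t)\ge\vinf-\mu_\infty(t)>0$ for every $t\ge0$. Thus $z$ is strictly increasing on $[0,\infty)$ and $u(a,t)=z(t)-z(t-a)>0$ for $0<a\le t$, with $a\mapsto u(a,t)$ increasing, absolutely continuous and $u(0,t)=0$.

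Next I would establish the reverse inequality $\dz(t)\le\vinf-\mu_\infty(t)$. Fixing $t$ and testing \eqref{eq.sec.333} with the value $-s$, $s\in(0,u(t,t))$, I split $\int_0^t\rhoinf|u-s|\,da$ at the point $a_0(s)$ defined by $u(a_0(s),t)=s$, using $|u-s|=s-u$ on $(0,a_0(s))$ and $|u-s|=u-s$ on $(a_0(s),t)$. Subtracting $\int_0^t\rhoinf\,u\,da$ and dividing by $s>0$ leads to
$$
-(\vinf-\dz(t))\le\left(\int_0^{a_0(s)}-\int_{a_0(s)}^t\right)\rhoinf(a)\,da-\frac{2}{s}\int_0^{a_0(s)}\rhoinf(a)\,u(a,t)\,da\,.
$$
As $s\to0^+$ one has $a_0(s)\to0^+$, so $\int_0^{a_0(s)}\rhoinf\to0$ and $\int_{a_0(s)}^t\rhoinf\to\mu_\infty(t)$; moreover the last term is bounded by $\tfrac{2}{s}\cdot s\int_0^{a_0(s)}\rhoinf\,da\to0$ because $u\le s$ there. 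This gives $-(\vinf-\dz(t))\le-\mu_\infty(t)$, hence $\dz(t)\le\vinf-\mu_\infty(t)$, and combined with the lower bound,
$$
\dz(t)=\vinf-\mu_\infty(t)\,,\qquad t\ge0\,.
$$

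Finally I would integrate from $z(0)=z^0$; since $\vinf>0$ gives $\sgn(\vinf)=1$, this is the first displayed formula of the statement. For the second, I substitute $\mu_\infty(\tau)=\mu_\infty-\int_\tau^\infty\rhoinf(a)\,da$ and use $\gamma=\vinf-\mu_\infty$ from case i) of the preceding Proposition, obtaining $z(t)=z^0+\gamma t+\sgn(\vinf)\int_0^t\int_\tau^\infty\rhoinf(a)\,da\,d\tau$. The only point requiring a little care is the vanishing of $\tfrac{2}{s}\int_0^{a_0(s)}\rhoinf\,u\,da$ as $s\to0$, which rests on the local integrability of $\rhoinf$ near $a=0$ together with the bound $u\le s$ on $(0,a_0(s))$; every other step is a direct transcription of Theorem~\ref{thm.plastic}, now valid on the whole half-line precisely because $t_1=+\infty$.
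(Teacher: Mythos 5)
Your proposal is correct and is exactly the argument the paper intends: the corollary is stated without its own proof because it follows from the first half of the proof of Theorem~\ref{thm.plastic} (the two-sided bound $|\vinf-\dz(t)|\le\mu_\infty(t)$ plus the test with small negative increments yielding $\dz(t)=\vinf-\sgn(\vinf)\mu_\infty(t)$), with the observation that $|\vinf|>\mu_\infty$ makes $t_1=+\infty$, so the contradiction arguments on $(t_1,\infty)$ are never needed -- precisely as in the paper's own treatment of the borderline case $\vinf=\pm\mu_\infty$ at the end of that proof. Your justification of the vanishing of $\frac{2}{s}\int_0^{a_0(s)}\rhoinf u\,da$ via $u\le s$ is in fact slightly cleaner than the paper's appeal to integrability of $\rhoinf u$ near $a=0$, but the route is the same.
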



\subsection{A numerical illustration}
We discretize the previous problem using minimizing movements scheme \cite{AmGiSa}.
We denote 
$R_j := \exp(- j \Delta a)
$, for $j\in \NN$,
and we approximate the functional $I[w,t] := \int_0^t | w - z(t-a) | \rhoinf(a) da$ by setting
$$
I_n[w] := \Delta a \sum_{j=0}^{n-1} | w - Z^{n-1-j} |R_j,
$$
and the total energy minimized for each time step $n$ reads :
\begin{equation}\label{eq.minimiz.mvts}
	\cE_n(w):= \frac{(w-Z^{n-1})^2}{2 \Delta t} + I_n[w] - \vinf w
\end{equation}
it is a convex functional with respect to $w$ and there exists a unique minimum for each step $n$.
So at each time step $t^{n}=n \Delta t$, we define  $Z^n$ as
$$
Z^n = \argmin\limits_{w \in \RR} \cE_n(w),
$$
One can compare $z$ computed  by this minimization scheme with the 
theoretical formula \eqref{eq.zinf} above. We plot  in fig. \ref{fig.minimis} the result of this computation, where  $\vinf$ is set to $\vinf=0.1$ in the plastic regime cf fig \ref{fig.plastic}, and $\vinf=1.5$ in the kinematic regime (cf fig. \ref{fig.elastic}) with $\mu_\infty=\int_\rr \exp(-a)da=1$.
   
\begin{figure}[ht!]
	\begin{subfigure}{0.4\textwidth}
		\raggedleft
		\begin{tikzpicture}[
		declare function={
			func(\x)= (\x < 0.1053605156578262) * (-0.001+0.1*\x-\x+1-exp(-\x))   +
			(\x >= 0.1053605156578262) * (-0.001-0.9*0.1053605156578262+1-exp(-0.1053605156578262) );	
		}]
		\begin{axis}[small,%
		xlabel={$t$},
		ylabel={$z$},legend style={at={(axis cs:0.2,0.001)},nodes={right}}]
		\addplot[color=blue] 
		table [x expr=\thisrowno{0}, y expr=\thisrowno{1}, col sep=space] {minimisation.gnp};
		\addplot[color=green,domain=0:0.2, mark=*]{func(x)};
		\legend{simulation,prediction}
		\end{axis}
		\end{tikzpicture}
		\caption{Displacement $z(t)$ in the plastic case ($\vinf=0.1<1=\mu_\infty$)}\label{fig.plastic}
	\end{subfigure}
\hspace{1cm}
	\begin{subfigure}{0.4\textwidth}
	\begin{tikzpicture}
		\begin{axis}[small,%
			xlabel={$t$},
	ylabel={$\dz$},legend style={at={(axis cs:10,1)},nodes={right}}]
	\addplot[color=blue] 
	table [x index=0, y index=1, col sep=space] {elastic.gnp};
	\addplot[color=green,mark=*] 
	table [x index=0, y index=2, col sep=space] {elastic.gnp};
		\legend{simulation,prediction}
	\end{axis}
	\end{tikzpicture}
	\caption{Velocity $\dot{z}(t)$ in the kinematic case ($\vinf=1.5>1=\mu_\infty$)}\label{fig.elastic}
\end{subfigure}
	\caption{Numerical simulation using a gradient flow scheme \eqref{eq.minimiz.mvts} associated to \eqref{eq.diff.inclusion.z.intro}}
		\label{fig.minimis}
\end{figure}
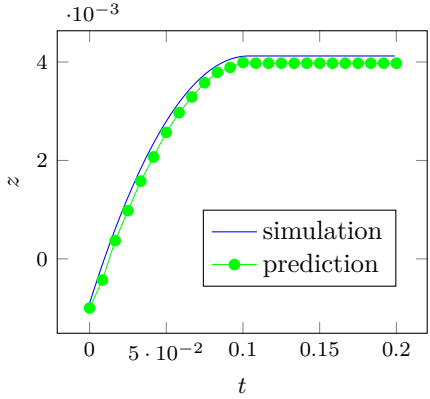
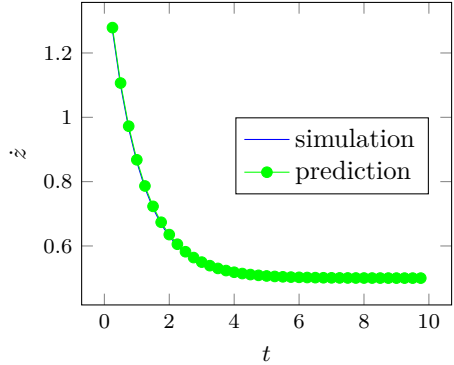

\def\cprime{$'$} \def\cprime{$'$}

\end{document}